\bmdefine\bTheta{\Theta}
\bmdefine\bDelta{\Delta}
\bmdefine\bQ{Q}
\newtheorem{theorem}{Theorem}
\newtheorem{lemma}[theorem]{Lemma}
\newtheorem{proposition}[theorem]{Proposition}
\newtheorem{corollary}[theorem]{Corollary}
\newtheorem{definition}{Definition}
\newtheorem{remark}{Remark}
\newcommand{\R}{\mathbb{R}}
\newcommand{\C}{\mathbb{C}}
\newcommand{\Z}{\mathbb{Z}}
\newcommand{\D}{\mathbb{D}}
\newcommand{\M}{\mathbb{M}}
\newcommand{\N}{\mathbb{N}}
\newcommand{\mc}{\mathcal}
\newcommand{\ds}{\displaystyle}
\newcommand{\be}{\begin{equation}}
\newcommand{\ee}{\end{equation}}
\DeclareMathOperator{\diag}{diag}
\DeclareMathOperator{\tr}{tr}
\newcommand{\norm}[1]{\left\|#1\right\|}
\newcommand{\ignore}[1]{}
\newcommand*{\qed}{\hfill\ensuremath{\square}}%
\title{\LARGE 
A Convex Characterization of Robust Stability for Positive and Positively Dominated Linear Systems
}
\author{Marcello Colombino, \textit{Student Member, IEEE,} and Roy S.\ Smith, \textit{Fellow, IEEE}
\thanks{The authors are with the Automatic Control Laboratory, Swiss Federal Institute of Technology, Z\"{u}rich, Switzerland. e-mail addresses:
        {\tt\small \{mcolombi,rsmith\}@control.ee.ethz.ch}.
        Marcello Colombino is supported in part by the Swiss National Science Foundation grant 2-773337-12.}%
        }
\begin{document}

\maketitle
\thispagestyle{empty}
\pagestyle{empty}

\begin{abstract}
We provide convex necessary and sufficient conditions for the robust stability of linear positively dominated systems. In particular we show that the structured singular value is always equal to its convex upper bound for nonnegative matrices and we use this result to derive necessary and sufficient Linear Matrix Inequality (LMI) conditions for robust stability that involve only the system's static gain. We show how this approach can be applied to test the robust stability of the Foschini--Miljanic algorithm for power control in wireless networks in presence of uncertain interference. 
\end{abstract}
\section{Introduction}
A dynamical system is said to be positive if, for every nonnegative input, the output trajectory remains nonnegative for all time~\cite{farina2011positive}. 
Positive systems have received increasing attention in recent years, not only for their practical relevance but also for their interesting theoretical features. The study of positive dynamical systems is intimately related to the spectral theory of positive and nonnegative matrices that dates back to 1907 with Perron~\cite{perron1907theorie} and 1912 with Frobenius~\cite{frobenius1912matrizen}. In fact positive linear systems present a simple dynamical behavior that evolves along the Perron--Frobenus eigenvector of the matrix that describes their dynamics. More recently positive systems have been of interest in the contest of biology where several complex phenomena can be modeled naturally using compartimental systems~\cite{jacquez1993qualitative}, whose dynamics is inherently positive. The simple dynamical behavior of positive systems simplifies greatly their analysis leading to strong and scalable results. For example, for these systems, The design of static output feedback controllers can be done with linear programming~\cite{rantzer2011distributed}, the Kalman--Yakubovic--Popov (KYP)  Lemma simplifies and a diagonal Lyapunov matrix is enough prove contractiveness~\cite{tanaka2011bounded}. This results leads to the design of optimal $\mathbb H_\infty$ distributed controllers using convex optimization. In~\cite{rantzer2012distributed} the authors show that the KYP lemma is equivalent to a linear program and therefore both $\mathbb H_\infty$ analysis and synthesis can be extended to large scale systems. In~\cite{rantzer2012optimizing} the authors introduce a class of systems called \textit{positively dominated} which is a proper superset of the class of positive systems and yet maintains most of their attractive properties despite showing a richer dynamical behavior. Second order oscillators, for example, are positively dominated provided they have sufficiently large damping. In this paper we mostly focus on the robustness analysis for positively dominated systems.  The problem of robustness has been tackled for positive systems by~\cite{briat2013robust} and~\cite{ebihara20111} in the $L_1$ and $L_\infty$ gain setting where the authors, exploiting linear copositive Lyapunov functions, provide tractable necessary and sufficient conditions for robustness analysis and controller synthesis, and in~\cite{son1996robust} where the authors consider norm-bounded static unstructured perturbations. In this work we focus on the structured singular value setting where we consider structured mixed real LTI uncertainties.  The structured singular value, $\mu$, was introduced by Doyle in~\cite{doyle1982analysis} and provides necessary and sufficient conditions for robust stability of LTI systems in presence of structured norm-bounded LTI uncertainty. However such conditions are, in general, NP hard to verify. In this work we show that for positive and  positively dominated linear systems these conditions are convex and thus easily computable. {Early work showing that the structured singular value is tractable for positive systems in the specific case of diagonal perturbations can be found in~\cite{tanaka2011symmetric}}. The main contributions of the paper can be summarized as following:
\begin{itemize}
\item We show that, for positively dominated systems, the worst--case {structured} LTI perturbations are alway static real and nonnegative. Furthermore we show that repeated scalar-times-identity uncertainty blocks can be treated as full block uncertainties without loss of generality.
\item We show that for nonnegative matrices the structured singular value is equal to its convex upper bound {for general block structures involving mixed real full and repeated scalar blocks}.
\item We show that for positively dominated systems robust stability with respect to a structured mixed real perturbation can be checked by solving a convex semidefinite optimization problem that only depends on the system's static gain.
{\item We illustrate how our method can be applied to study the robust stability of the well known Foschini--Miljanic~\cite{foschini1993simple} algorithm for power control in wireless networks in the presence of uncertain interference gains.}
\end{itemize}
The structure of the paper is the following: in Section~\ref{sec:mu} we introduce the uncertainty setting and the structure singular value, $\mu$. In Section~\ref{sec:positive:systems} we introduce positive and positively dominated systems and in Section~\ref{sec:main} we present the results on the structured singular value for nonnegative matrices. In Section~\ref{sec:rob:stab:pos} we derive the convex conditions for robust stability of positive systems and finally, in Secion~\ref{sec:foschini}, we show the applicability of our method to the analysis of robustness of the Foschini--Miljanic algorithm. A preliminary version of this work was presented in~\cite{ColSmi:2014:IFA_4769}, where only positive systems interconnected with full block complex uncertainties were taken into consideration.

\subsection*{Notation}
The set of natural numbers is denoted by $\N$, the set of reals by $\R$ and $\R_+$ $(\R_{++})$ denotes the set of nonnegative (positive) reals. The set of complex numbers is denoted by $\C$ and $\Z$ denotes the integers. With $\Z_{[a,b]}$ we denote the set $[a,b]\cap\Z$. The set of Metzler matrices (matrices with nonnegative off diagonal elements) is denoted by $\M$. The set of nonnegative (positive) diagonal matrices is denoted by $\D_{+}$ $(\D_{++})$. Given a matrix $A\in\C^{m\times n}$, $A^\top$ denotes its transpose and $A^*$ its conjugate transpose. We use $\bar{\sigma}(A)$ to indicate the maximum singular value of $A$, $\tr(A)$ to denote its trace and $\rho(A)$ its spectral radius. We write $A\geq0$  $(A>0)$ if $A$ has nonnegative (positive) entries and $A\succcurlyeq0$  $(A\succ0)$ to denote that A is {symmetric and} positive semidefinite (definite) and we denote with $|A |$ the matrix of the modulus of the elements of $A$. For $x\in\C^m$,  $\norm{x}=\sqrt{x^*x}$. $L_2$ denotes the space of square Lebesgue integrable signals and $\mathbb H_\infty$ denotes the space of causal linear time invariant operators that map $L_2^m$ to itself. For $p\in L_2^m$ the norm is defined as:
$$
\norm{p}_2=\left(\int_{-\infty}^{+\infty}\norm{p(t)}^2\text{d}t\right)^{\frac{1}{2}}.
$$
For $M\in\mathbb H_\infty$ the operator norm is defined as:
$$
\norm{M}_\infty=\sup_{p\in L_2^m\backslash\{0\}}\frac{\norm{Mp}_2}{\norm{p}_2}.
$$
This norm is sometimes referred to as the $\mathbb H_\infty$ gain of $M$. We denote by $\hat{M}(j\omega)$ the transfer function of $M$ in the Laplace domain evaluated on the imaginary axis. 

\section{Robustness analysis and the $\mu$ framework}\label{sec:mu}

\subsection{Mixed real structured uncertainty}

In this section we introduce the model for uncertain systems that will be analyzed throughout the paper. Given $f,s\in\Z_{[0,\infty)}$, we consider two sets of indices $\mc I_\R$ and $\mc I_\C$ such that $\mc I_\R\cup\mc I_\C=\Z_{[1,f+s]}$ and $\mc I_\R\cap\mc I_\C=\varnothing$. The uncertainty structure we consider is a set of block diagonal complex matrices where the diagonal is composed of $f$ full block uncertainties and $s$ repeated scalar uncertainty blocks. The uncertainties corresponding to the index set $\mc I_\R$ are real while those corresponding to the index set $\mc I_\C$ are complex.
More formally the uncertainty lives in a linear subspace of $\C^{m\times m}$ defined by
\begin{equation}
\label{eqn:deltarc}
\begin{split}
\bDelta_{\R\C}:=\{ &   \diag  \left(\Delta_1,\dots,\Delta_f,  \delta_{f+1}I_{m_{f+1}},\dots,\delta_{f+s} I_{m_{f+s}} \right) : \\
&\Delta_i \in \R^{m_i\times m_i}\, \forall i\in\mc I_\R\cap\Z_{[1,f]}, \\
&\Delta_i \in \C^{m_i\times m_i}\, \forall i\in\mc I_\C\cap\Z_{[1,f]},\\
& \delta_i\in \R\, \forall i\in \mc I_\R\cap\Z_{[f+1,f+s]},\\
& \delta_i\in \C\, \forall i\in \mc I_\C\cap\Z_{[f+1,f+s]} \}, 
\end{split}
\end{equation}
The unit ball in $\bDelta_{\R\C}$ is defined by
\be\label{eqn:ball:RC}
\mc{B}_{\bDelta_{\R\C}}:=\left\{ \Delta\in \bDelta_{\R\C} :  \bar\sigma(\Delta)\leq1 \right\}.
\ee
Given the preliminary definitions above we  consider a linear time invariant system $M\in\mathbb H_\infty$, and the following feedback interconnection:
\begin{equation}\label{eqn:interconnection}
\begin{split}
p=Mq\\
q=\Delta p,
\end{split}
\end{equation}
 depicted in Figure~\ref{fig:MDelta}, where $\Delta\in\mathbb H_\infty$ is an unknown stable linear time invariant system such that 
 \be\label{eqn:bound:delta}
 \hat \Delta(j\omega)\in \mc{B}_{\bDelta_{\R\C}},\, \forall \omega\in \R.
 \ee
 Note that the blocks in $\bDelta_{\R\C}$ corresponding to $\mc I_\C$ represent dynamical uncertainties while those corresponding to $\mc I_\R$ represent unknown real parameters.
\begin{remark}
Whenever we consider uncertain interconnection in~(\ref{eqn:interconnection}), we always assume the system $M$ to be square (the number of outputs is the same as the number of inputs). This assumption simplifies the notation and is not restrictive as one can pad the transfer matrix with zeros to make it square without affecting the stability of the interconnection.\qed
\end{remark} 

We are interested in studying the robust stability of system~\eqref{eqn:interconnection} with respect to all possible stable linear time invariant operators $\Delta$ that satisfy the norm bound~\eqref{eqn:bound:delta}. This is a well studied problem which has been proven to be NP-hard in general~\cite[Theorem 2.7]{braatz1994computational}. In this paper we consider such problem in the special case when $M$ is a positively dominated system as defined in Section~\ref{sec:positive:systems}.  Under this assumption, in section~\ref{sec:rob:stab:pos} we will show necessary and sufficient conditions for the robust stability of~\eqref{eqn:interconnection} in the form of Linear Matrix Inequalities. 
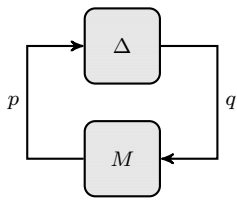
\begin{figure}[h]
\begin{center}
%
%
%
%
%
\tikzstyle{block} = [draw,rectangle,rounded corners,thick,
                                top color=black!10, bottom color=black!10]
\tikzstyle{sum} = [draw,circle,inner sep=0mm,minimum size=2mm]
\tikzstyle{connector} = [->,thick]
\tikzstyle{line} = [thick]
\tikzstyle{branch} = [circle,inner sep=0pt,minimum size=1mm,fill=black,draw=black]
\tikzstyle{guide} = []
\tikzstyle{snakeline} = [connector, decorate, decoration={pre length=0.2cm,
                         post length=0.2cm, snake, amplitude=.4mm,
                         segment length=2mm},thick, magenta, ->]
\noindent%
\begin{tikzpicture}[scale=1, auto, >=stealth']
  
    \small

    
     \node[block, minimum height=1.0cm, minimum width=1.0cm] (M)  {$M$};
      
     \node[block, minimum height=1cm, minimum width=1cm](delta) at ($(M) + (0,1.5cm)$)
                                   {$ \Delta$}; 

     \coordinate (zline) at ($(M.west) - (0.75cm,0)$);
     \coordinate (vline) at ($(M.east) + (0.75cm,0)$);
      

    \draw [connector] (M) -- (M -| zline) |- (delta);
    \draw [connector]  (delta) -- (delta -| vline) |- (M);
    
    \path (M.north) -- node (midpoint) [midway]{} (delta.south);
    
    \node[left] at (zline |- midpoint) {$p$};
    \node[right] at (vline |- midpoint) {$q$};
  
  \end{tikzpicture}%
\caption{The interconnection of $M$ and $\Delta$. }
\label{fig:MDelta}
\end{center}
\end{figure}
\subsection{The Structured Singular Value}
Before considering the case where $M$ is restricted to be a positively dominated system, we present a brief overview of the classical conditions that guarantee the robust stability of~\eqref{eqn:interconnection} in the general case as they are useful to develop the main results of the paper. To do so we introduce the structured singular value, $\mu$, a widely used tool for robustness analysis~\cite{doyle1982analysis}, robust control synthesis~\cite{doyle1983synthesis} and model validation~\cite{smith1992model}. 
The ``$\mu$ framework'' provides necessary and sufficient conditions for the interconnection in~(\ref{eqn:interconnection}) to be robustly stable for all stable LTI $\Delta$ satisfying~\eqref{eqn:bound:delta}. Given a matrix $M\in\C^{m\times m}$ and a set $\bDelta\subset\C^{m\times m}$ describing the structure of the uncertainty, the structured singular value is defined as~\cite{doyle1982analysis}
\begin{equation*}
\mu(M,\bDelta):=\frac{1}{\inf\{\norm{\Delta}\,:\,\Delta\in\bDelta, \det(I-M\Delta)=0\}},\nonumber
\end{equation*}
if there is no $\Delta\in{\bDelta}$ such that $\det(I-M\Delta)=0$, then $\mu(M,\bDelta)=0$ by convention.
The following result due to Doyle~\cite{doyle1982analysis, packard1993complex} relates the robust stability of the interconnection in~(\ref{eqn:interconnection}) to infinitely many $\mu$ tests.
\begin{proposition}\label{propo:robust:stability} Let $M\in\mathbb H_\infty$ be a stable LTI operator. Then the interconnection in ~(\ref{eqn:interconnection}) is stable for all $\Delta\in\mc{B}_{\bDelta_{\R\C}}$ \textit{if and only if}
\be\label{eqn:mu:condition}
\sup_{\omega\in\R}\mu(\hat{M}(j\omega),\bDelta_{\R\C})<1.
\ee\qed
\end{proposition}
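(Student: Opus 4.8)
The plan is to prove both implications through the classical Nyquist/homotopy characterization of closed-loop stability, invoking only the definition of $\mu$. First I would record the following well-posedness and stability fact: since $M,\Delta\in\mathbb{H}_\infty$, the interconnection~\eqref{eqn:interconnection} is stable if and only if $\det\!\left(I-\tau\hat{M}(j\omega)\hat{\Delta}(j\omega)\right)\neq 0$ for every $\omega\in\R\cup\{\infty\}$ and every $\tau\in[0,1]$. Indeed, at $\tau=0$ this determinant equals $1$, so its winding number around the origin along the Nyquist contour is zero; if the determinant never vanishes along the homotopy $\tau\mapsto\tau\Delta$, then by homotopy invariance of the winding number it remains zero at $\tau=1$, which is precisely the generalized Nyquist condition for stability of the interconnection of two stable systems. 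Conversely, if the determinant vanishes for some pair $(\omega_0,\tau_0)$, then the admissible perturbation $\tau_0\Delta$, which still satisfies~\eqref{eqn:bound:delta}, destroys well-posedness or internal stability.

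For sufficiency, assume $\sup_{\omega}\mu(\hat{M}(j\omega),\bDelta_{\R\C})<1$ and fix any admissible $\Delta$. For each $\omega$ the matrix $\hat{\Delta}(j\omega)$ lies in $\bDelta_{\R\C}$ — a real-rational scalar entry that is real on all of $j\R$ must be a real constant, so the $\mc I_\R$ blocks are genuinely real — and $\bar{\sigma}(\tau\hat{\Delta}(j\omega))\le\bar{\sigma}(\hat{\Delta}(j\omega))\le1<1/\mu(\hat{M}(j\omega),\bDelta_{\R\C})$ for all $\tau\in[0,1]$. By the definition of $\mu$ this forces $\det\!\left(I-\tau\hat{M}(j\omega)\hat{\Delta}(j\omega)\right)\neq0$ for all $\omega,\tau$, and the first step yields robust stability.

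For necessity I would argue by contraposition. Suppose $\sup_{\omega}\mu(\hat{M}(j\omega),\bDelta_{\R\C})\ge1$. By upper semicontinuity of $\mu(\hat{M}(j\cdot),\bDelta_{\R\C})$ and continuity of $\hat{M}$ on $\R\cup\{\infty\}$, there is a frequency $\omega_0$, which we may take in $(0,\infty)$, and a matrix $\Delta_0\in\bDelta_{\R\C}$ with $\bar{\sigma}(\Delta_0)\le1$ and $\det(I-\hat{M}(j\omega_0)\Delta_0)=0$. The key construction is to realize $\Delta_0$ as the value at $j\omega_0$ of an admissible stable LTI perturbation $\Delta$: for each real block keep the (real) constant matrix; for each complex block replace every scalar degree of freedom by a stable scalar all-pass, a product of first-order factors $\tfrac{a-s}{a+s}$ with $a>0$, whose phase at $\omega_0$ matches the required phase, scaled by the appropriate singular value so that the block norm never exceeds $\bar{\sigma}(\Delta_0)$. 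The resulting $\Delta$ has the prescribed block structure, is real on the $\mc I_\R$ blocks, satisfies $\bar{\sigma}(\hat{\Delta}(j\omega))\le\bar{\sigma}(\Delta_0)\le1$ for all $\omega$, and obeys $\hat{\Delta}(j\omega_0)=\Delta_0$; hence $\det(I-\hat{M}(j\omega_0)\hat{\Delta}(j\omega_0))=0$, and by the first step the interconnection with this $\Delta$ is not stable.

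I expect the main obstacle to be this last direction, and within it the interpolation step: turning the constant worst-case matrix $\Delta_0$ into a genuine stable, real-rational perturbation that respects the realness of the $\mc I_\R$ blocks and keeps the $\mathbb{H}_\infty$ norm below one at every frequency, together with the boundary bookkeeping at $\omega=0$ and $\omega=\infty$ and the case where the supremum of $\mu$ is attained only in a limit. The sufficiency direction and the Nyquist/homotopy reduction are, by contrast, routine. Since this is the classical result of Doyle and Packard, in the write-up I would present the argument above in condensed form and refer to~\cite{doyle1982analysis,packard1993complex} for the interpolation details.
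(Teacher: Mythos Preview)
The paper does not give its own proof of this proposition: it is stated as a known result and attributed directly to Doyle~\cite{doyle1982analysis} and Packard--Doyle~\cite{packard1993complex}, with no argument beyond the citation. Your sketch is the standard route taken in those references---Nyquist/homotopy reduction for sufficiency, and the all-pass interpolation construction for necessity---so there is nothing to compare; the issues you flag (attainment of the supremum, continuity of $\mu$ for mixed real/complex structures, and the boundary frequencies) are precisely the technical points handled in the cited works, and your plan to defer those details to~\cite{doyle1982analysis,packard1993complex} matches what the paper itself does.
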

Even a single evaluation of $\mu$ is a hard problem, according to Proposition~\ref{propo:robust:stability} we need to evaluate $\mu$ infinitely many times in order to obtain a certificate for robust stability. Fortunately upper and lower bounds to $\mu$ are known and they allow to obtain tractable sufficient or necessary conditions for robust stability. 

\subsection{Standard bounds of $\mu$}

In this subsection we review some important results on the structured singular value and its bounds. For a more exhaustive review on this topic we refer the reader to~\cite{packard1993complex} or~\cite[Chapters~7-8]{dullerud2000course}. {Given an uncertainty structure $\bDelta$, the following upper and lower bounds apply:
\be\label{eqn:sigmaub}
\rho(M)\leq\mu(M,\bDelta)\leq \inf_{\Theta\in\bTheta}\bar\sigma(\Theta^{\frac{1}{2}}M\Theta^{-\frac{1}{2}}),
\ee
where $\bTheta$ is set of positive definite matrices that commute with all $\Delta\in\bDelta$ defined as
\begin{align}
\bTheta:=\{\Theta\succ0\,|\,\Theta\Delta=\Delta\Theta,\,\forall\Delta\in\bDelta\}.\nonumber
\end{align}
The upper and lower bounds in \eqref{eqn:sigmaub} are not tight in general, however in Section~\ref{sec:main} we show that if $M\geq0$, then the upper bound is tight for the most common block diagonal structures and, in particular, it is tight for the structure $\bDelta_{\R\C}$ introduced in~\eqref{eqn:deltarc}.}
\section{Positive and Positively dominated Systems}\label{sec:positive:systems}
\subsection{Preliminaries on positive and positively dominated systems }
As the goal of the paper is to characterize robust stability for positive and positively dominated systems, we now provide the basic definitions and the necessary background results concerning these classes of systems.
\begin{definition} A linear time invariant operator $M$ is said to be \textit{externally positive} if its impulse response is nonnegative.\qed
\end{definition}
It can be shown that a nonnegative impulse response is a necessary and sufficient condition to have nonnegative output for all nonnegative inputs. {External positivity is an input-output property and it is independent of the realization.}

We now present some results concerning externally positive systems. 

\begin {proposition}[\cite{tanaka2013dc}]\label{propo:positive:norm} Let $\hat{M}(j\omega)$ be the transfer function of an externally positive system $M\in\mathbb H_\infty$. Then
\be\label{eqn:pos:sys:properties}
\|{M}\|_\infty=\sup_{\omega\in\R}\bar\sigma(\hat{M}(j\omega))=\bar\sigma(\hat{M}(0)), \;\; \text{and}\;\;
\hat{M}(0)\geq0.
\ee
\qed
\end{proposition}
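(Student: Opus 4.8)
The plan is to reduce the statement to two elementary facts: an \emph{entrywise} domination of the frequency response by the static gain, and the monotonicity of the largest singular value under entrywise domination of nonnegative matrices. First I would recall the standard frequency-domain formula $\|M\|_\infty=\sup_{\omega\in\R}\bar\sigma(\hat M(j\omega))$, valid for every $M\in\mathbb H_\infty$, so that only the identity $\sup_{\omega}\bar\sigma(\hat M(j\omega))=\bar\sigma(\hat M(0))$ and the relation $\hat M(0)\ge0$ remain. Writing the impulse response of $M$ as $g(t)=D\,\delta(t)+g_c(t)$ with (by external positivity) $D\ge0$ and $g_c(t)\ge0$ for all $t\ge0$, we have $\hat M(j\omega)=D+\int_0^\infty g_c(t)\,e^{-j\omega t}\,\text{d}t$. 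Evaluating at $\omega=0$ gives $\hat M(0)=D+\int_0^\infty g_c(t)\,\text{d}t$, a sum of nonnegative terms, hence $\hat M(0)\ge0$; this settles the last claim. Applying the triangle inequality entrywise and using $|e^{-j\omega t}|=1$ yields
\[
|\hat M(j\omega)|\;\le\;D+\int_0^\infty g_c(t)\,\text{d}t\;=\;\hat M(0),\qquad\forall\omega\in\R,
\]
where the inequality is understood entrywise.

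It then remains to prove that $0\le|A|\le B$ entrywise implies $\bar\sigma(A)\le\bar\sigma(B)$, applied with $A=\hat M(j\omega)$ and $B=\hat M(0)$. I would split this into two steps. (i) From the variational characterization $\bar\sigma(A)=\max_{\|x\|=\|y\|=1}|y^{*}Ax|$ and the triangle inequality, $|y^{*}Ax|\le|y|^{\top}|A|\,|x|\le\bar\sigma(|A|)$, since $\||x|\|=\|x\|$ and $\||y|\|=\|y\|$; hence $\bar\sigma(A)\le\bar\sigma(|A|)$. (ii) Since $0\le|A|\le B$ entrywise, also $0\le|A|^{\top}|A|\le B^{\top}B$ entrywise, so by the Perron--Frobenius monotonicity of the spectral radius on nonnegative matrices, $\bar\sigma(|A|)^{2}=\rho(|A|^{\top}|A|)\le\rho(B^{\top}B)=\bar\sigma(B)^{2}$. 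Chaining (i)--(ii) with the entrywise bound above gives $\bar\sigma(\hat M(j\omega))\le\bar\sigma(\hat M(0))$ for every $\omega$; the reverse inequality is trivial (take $\omega=0$), so the supremum over $\omega$ is attained at $\omega=0$, which completes the proof.

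The routine parts are the frequency-domain formula for $\|M\|_\infty$ and the entrywise triangle inequality. The only place requiring a genuine, though classical, ingredient is step (ii) — monotonicity of the spectral radius of nonnegative matrices under entrywise domination — together with the bound $\bar\sigma(A)\le\bar\sigma(|A|)$ of step (i); these are where I would be most careful, and I would cite them from standard Perron--Frobenius / matrix-analysis references. A minor technical wrinkle is isolating the direct-feedthrough term $D$ (and tacitly assuming $g_c$ integrable) so that the Laplace-domain integrals are well defined; for strictly proper systems ($D=0$) this is immediate.
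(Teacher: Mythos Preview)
The paper does not prove this proposition; it is quoted verbatim from \cite{tanaka2013dc} and simply marked with a \qed. So there is no ``paper's own proof'' to compare against. Your argument is a correct, self-contained proof of the statement: the entrywise bound $|\hat M(j\omega)|\le \hat M(0)$ follows from nonnegativity of the impulse response, and the passage from entrywise domination to $\bar\sigma$-domination via $\bar\sigma(A)\le\bar\sigma(|A|)$ and Perron--Frobenius monotonicity of $\rho$ is exactly the right mechanism. In fact, the very inequality you rely on in step~(ii), namely $0\le C\le B\Rightarrow\rho(C)\le\rho(B)$, is the same \cite[Theorem~8.1.18]{horn2012matrix} that the paper itself invokes later in the proof of Theorem~\ref{thm:delta:identity}, so your argument meshes well with the surrounding material.

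One small comment: your caveat about integrability of $g_c$ is not really a loose end here. Since $g_c\ge 0$, the monotone convergence theorem gives $\int_0^\infty g_c(t)\,\text{d}t=\lim_{T\to\infty}\int_0^T g_c(t)\,\text{d}t$, and each entry of this limit equals the corresponding entry of $\hat M(0)$, which is finite because $M\in\mathbb H_\infty$ (the transfer function is bounded on the imaginary axis, in particular at $\omega=0$). So $g_c\in L^1$ is a consequence of the hypotheses rather than an extra assumption.
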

In other words, for externally positive systems, the $\mathbb{H}_\infty$ gain is attained at zero frequency and the transfer matrix evaluated at zero frequency is nonnegative. Based on these properties, a larger class of systems called positively dominated can be  defined. Such systems were introduced in~\cite{rantzer2012optimizing} and are defined as follows.
\begin{definition}
 Let $\{\hat m_{ik}(j\omega)\}_{i,k=1}^m$ be the element of the transfer matrix $\hat{M}(j\omega)$ of a system $M\in\mathbb H_\infty$, $M$ is \textit{positively dominated} if
\be\label{eqn:fdi:posdom}
 \sup_{\omega \in \R}|\hat m_{ik}(j\omega)|= \hat m_{ik}(0),\quad \forall (i,k)\in\Z_{[1,m]}^2.
 \ee\qed
\end{definition}
The class of positively dominated systems is strictly larger than that of positive systems. Clearly~\eqref{eqn:pos:sys:properties} holds for positively dominated systems and every positive system is also positively dominated while the opposite is not true.\footnote{The system $\frac{w_n^2}{s^2+2 \zeta w_n s + w_n^2}$ with $\frac{1}{\sqrt{2}} < \zeta < 1$ is positively dominated but not externally positive since the impulse response changes sign infinitely many times.} Note that the definition of positively dominated only applies to stable LTI systems, i.e., those mapping $L_2$ to itself. 
As already noted in~\cite{rantzer2012distributed}, at least for rational transfer functions, testing whether a system $M$ is positively dominated is a convex problem and can be solved efficiently with semidefinite programming. It is also interesting to notice that delays, as they do not affect the system's gain, do not change positive domination, that is a system with delays is positively dominated if and only if the corresponding delay-free system is. 

\begin{remark}As already observed in~\cite{rantzer2012optimizing}, the set of positively dominated systems is a convex cone: if $G_1(s)$ and $G_2(s)$ are positively dominated, then $\tau_1G_1(s)+\tau_2 G_2(s)$ is positively dominated for all $\tau_1,\tau_2\in\R_+$. 
\end{remark}

\subsection{Exact relaxations for positive quadratic programming}
We now mention a useful theorem which allows us to reformulate a class of non-convex quadratic feasibility problems as convex semidefinte programs (SDPs) and will be exploited in the next section.
\begin{proposition}\textnormal{\cite[Theorem 3.1]{kim2003exact}}\label{thm:relaxation}
Consider $M_0,\dots,M_N\in\M^{n}$ and the optimization problems
\begin{align*}
&P_1:\left\{\begin{array}{rll}
p_1^\star=\displaystyle \max_{x\in\R^n}  &x^\top M_0x\\
\textnormal{s.t.} &x^\top M_ix \geq b_i & \forall\;i\in\Z_{[1,N]}\\
\end{array}\right.\\
&P_2:\left\{\begin{array}{rll}
p_2^\star=\displaystyle \max_{X\succeq0}  &\textnormal{tr}(M_0X)\\
\textnormal{s.t.} &\textnormal{tr}(M_iX)\geq b_i & \forall\;i\in\Z_{[1,N]}.\\
\end{array}\right.
\end{align*}
Then the following statements hold true
\begin{itemize}
\item[i)] $p_1^\star=p_2^\star.$
\item[ii)] If $P_2$ has a solution, then it also has a rank 1 solution.
\item[iii)] Let $X^\star$ be a solution of $P_2$, then $x=\left[\sqrt{X^\star_{11}},\dots,\sqrt{X^\star_{NN}}\,\right]^\top$ is a solution of $P_1$.\qed
\end{itemize}
\end{proposition}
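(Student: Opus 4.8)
The plan is to show that $P_2$ is an \emph{exact} convex relaxation of $P_1$, the two statements about rank-one solutions following as byproducts of the exactness argument. I would structure the proof around a single elementary inequality tailored to Metzler matrices and two easy lifting/rounding steps.

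First I would record the (easy) relaxation direction: for any $x$ feasible for $P_1$, the rank-one matrix $X=xx^\top$ satisfies $X\succeq0$, $\tr(M_iX)=x^\top M_ix\ge b_i$ for all $i\in\Z_{[1,N]}$, and $\tr(M_0X)=x^\top M_0x$. Hence every feasible point of $P_1$ lifts to a feasible point of $P_2$ with the same objective value, so $p_2^\star\ge p_1^\star$ and, in particular, $P_2$ is feasible whenever $P_1$ is.

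The heart of the argument, and the only place the Metzler hypothesis is used, is the following rounding inequality: for every $M\in\M^n$ and every $X\succeq0$, the vector $x\in\R^n$ with $x_k=\sqrt{X_{kk}}$ (well defined, since the diagonal of a positive semidefinite matrix is nonnegative) satisfies $x^\top Mx\ge\tr(MX)$. To prove it I would split $\tr(MX)=\sum_k M_{kk}X_{kk}+\sum_{i\ne k}M_{ik}X_{ik}$. The diagonal terms already equal $M_{kk}x_k^2$; for the off-diagonal terms I would combine $M_{ik}\ge0$ (the Metzler property) with the $2\times2$ minor bound $X_{ik}\le|X_{ik}|\le\sqrt{X_{ii}X_{kk}}$ valid for $X\succeq0$, which gives $M_{ik}X_{ik}\le M_{ik}x_ix_k$. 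Summing all terms yields $\tr(MX)\le\sum_{i,k}M_{ik}x_ix_k=x^\top Mx$. I expect this to be the conceptual crux: the inequality goes in exactly the wrong direction for matrices with negative off-diagonal entries, so Metzler structure is what makes the relaxation exact, and spotting that this single bound drives everything is the main ``obstacle'' (the rest is bookkeeping).

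Finally I would assemble the pieces. Let $X^\star$ be a maximizer of $P_2$ and let $x$ be obtained from $X^\star$ by the square-root-of-diagonal map. Applying the rounding inequality to each $M_i$ gives $x^\top M_ix\ge\tr(M_iX^\star)\ge b_i$, so $x$ is feasible for $P_1$; applying it to $M_0$ gives $x^\top M_0x\ge\tr(M_0X^\star)=p_2^\star$. Together with $p_2^\star\ge p_1^\star\ge x^\top M_0x$ from the relaxation step, all inequalities collapse to equalities, which simultaneously proves $p_1^\star=p_2^\star$ (statement i), that $x$ is optimal for $P_1$ (statement iii), and that $xx^\top$ is a rank-one optimal solution of $P_2$ (statement ii). To cover the degenerate cases in (i) I would argue separately: if $P_1$ is infeasible then so is $P_2$, since any feasible $X$ would round to a feasible $x$; and if $p_1^\star=+\infty$ the relaxation bound forces $p_2^\star=+\infty$, while conversely a sequence of feasible $X_k$ with $\tr(M_0X_k)\to+\infty$ rounds to feasible $x_k$ with $x_k^\top M_0x_k\to+\infty$.
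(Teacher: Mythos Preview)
The paper does not give its own proof of this proposition; it simply quotes the result from \cite{kim2003exact} and marks it with a \qed. So there is nothing in the paper to compare against beyond the bare citation.

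Your argument is correct and is, in fact, the same mechanism that underlies the cited result: lift $x\mapsto xx^\top$ to get $p_2^\star\ge p_1^\star$, then round $X\mapsto\bigl(\sqrt{X_{11}},\dots,\sqrt{X_{nn}}\bigr)$ and use the Metzler structure together with $|X_{ik}|\le\sqrt{X_{ii}X_{kk}}$ for $X\succeq0$ to obtain $x^\top Mx\ge\tr(MX)$, which closes the gap. Your handling of the degenerate cases (infeasibility, unbounded objective) is also sound. One cosmetic remark: in statement~(ii) ``rank~1'' should be read as ``rank at most~1'', since $X^\star=0$ rounds to $x=0$; this is a quirk of the original statement, not of your proof.
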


\section{Structured singular value for\\ nonnegative matrices}\label{sec:main}
In this section we consider the structured singular value for nonnegative matrices. In particular we  show that if $M\geq0$, then the inequality~(\ref{eqn:sigmaub}) always holds with equality for the upper bound. {The structured singular value $\mu$ can therefore be computed by solving an LMI problem.}
Before proving such equality we need the following result:
\begin{theorem}\label{thm:delta:identity}
Let $M\in\R_+^{m\times m}$ and $\bDelta_{\R\C}$ defined in \eqref{eqn:deltarc} and $\bar s:=\sum_{k=f+1}^{f+s}m_k$. If $\mu(M,\bDelta_{\R\C})\geq1$ then there exist a $\tilde\Delta=\textnormal{diag}(\Delta_1,\dots,\Delta_f,  \delta I_{\bar s})\in\bDelta_{\R\C}\cap\R_+^{m\times m}$, with $\|\tilde\Delta\| \leq 1$ such that $\det(I-M\tilde\Delta)=0$, furthermore there exist a nonnegative vector $q\in\R_+^m$ such that $q=M\tilde\Delta q$.\qed
\end{theorem}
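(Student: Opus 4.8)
The plan is to start from the definition of $\mu$: since $\mu(M,\bDelta_{\R\C})\geq 1$, there exists $\Delta\in\bDelta_{\R\C}$ with $\bar\sigma(\Delta)\leq 1$ and $\det(I-M\Delta)=0$, hence a nonzero $v\in\C^m$ with $v = M\Delta v$. The first step is to show we may replace $\Delta$ by a \emph{nonnegative real} matrix of the same (or smaller) norm that still destabilizes. Write $q:=\Delta v$, so $v = Mq$ and $q=\Delta M q$. Because $M\geq 0$, we have $|v| = |Mq| \leq M|q|$ entrywise. The idea is then to build, block by block, a nonnegative $\tilde\Delta\in\bDelta_{\R\C}$ that maps $M|q|$ back to (something dominating) $|q|$, so that the nonnegative matrix $\tilde\Delta M$ has a nonnegative eigenvector-like inequality $\tilde w \leq \tilde\Delta M \tilde w$; a standard Perron--Frobenius / Collatz--Wielandt argument then upgrades the inequality to an equality $q = M\tilde\Delta q$ for some genuine nonnegative $q$, after possibly rescaling $\delta$ or one of the $\Delta_i$ down (which only decreases the norm, preserving $\|\tilde\Delta\|\leq 1$). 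Continuity / monotonicity of the spectral radius in the nonnegative entries is what lets us hit the critical value exactly.

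Concretely, for each full block $i\in\Z_{[1,f]}$, partition $v$ and $q$ conformally as $v_i, q_i \in\C^{m_i}$; the block equation is $q_i = \Delta_i v_i$ with $\|\Delta_i\|\leq 1$ (for $i\in\mc I_\C$) or additionally $\Delta_i$ real (for $i\in\mc I_\R$). I would replace $\Delta_i$ by the rank-one nonnegative matrix $\tilde\Delta_i := \dfrac{|q_i|\,|v_i|^\top}{\|v_i\|^2}$ when $v_i\neq 0$ (and $\tilde\Delta_i=0$ otherwise), which is real, nonnegative, has $\|\tilde\Delta_i\| = \|q_i\|/\|v_i\| \leq \||\Delta_i v_i|\|/\|v_i\| \leq 1$, and satisfies $\tilde\Delta_i |v_i| = |q_i|$. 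For the repeated scalar blocks, the key structural simplification is to \emph{merge} them: the original constraint forces $\delta_k$ to multiply a whole $m_k$-dimensional subvector, but since the merged block $\delta I_{\bar s}$ is being allowed to be a \emph{full} nonnegative block of size $\bar s$ (this is exactly the "repeated-scalar-blocks can be treated as full blocks without loss" claim advertised in the introduction), I can again use a rank-one nonnegative construction on the stacked scalar part, giving some $\delta$ with $|\delta|\leq 1$. Stacking these gives $\tilde\Delta\in\bDelta_{\R\C}\cap\R_+^{m\times m}$ with $\tilde\Delta |v| = |q|$, hence $M\tilde\Delta|v| = M|q| \geq |v|$ entrywise, i.e. $|v|$ is a Collatz--Wielandt sub-eigenvector of the nonnegative matrix $M\tilde\Delta$, so $\rho(M\tilde\Delta)\geq 1$.

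To finish, I would use monotonicity of $\rho$ on nonnegative matrices: scaling $\tilde\Delta$ down continuously from its current value toward $0$ sends $\rho(M\tilde\Delta)$ continuously from $\geq 1$ to $0$, so at some scaling factor $t\in(0,1]$ we get $\rho(tM\tilde\Delta)=1$ exactly; absorb $t$ into $\tilde\Delta$ (this keeps it nonnegative, in $\bDelta_{\R\C}$, and only shrinks its norm, so $\|\tilde\Delta\|\leq 1$ still). Then $1$ is an eigenvalue of the nonnegative matrix $M\tilde\Delta$, and by Perron--Frobenius it has a nonnegative eigenvector $w\geq 0$, $w\neq 0$, with $M\tilde\Delta w = w$. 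Setting $q := \tilde\Delta w \geq 0$ gives $Mq = M\tilde\Delta w = w$, hence $M\tilde\Delta q = \tilde\Delta M q = \tilde\Delta w = q$, and $\det(I - M\tilde\Delta)=0$; this is exactly the claimed conclusion. I expect the main obstacle to be the bookkeeping in the repeated-scalar case: one must check carefully that collapsing $\delta_{f+1}I_{m_{f+1}},\dots,\delta_{f+s}I_{m_{f+s}}$ into a single full nonnegative block $\delta I_{\bar s}$ is legitimate here — i.e. that the resulting $\tilde\Delta$ genuinely lies in $\bDelta_{\R\C}$ as the theorem states it — and that the norm bound $\|\tilde\Delta\|\leq 1$ survives the merging (the operator norm of a block-diagonal matrix is the max of block norms, so this reduces to bounding the norm of the single merged rank-one block, which the construction handles). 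The Perron--Frobenius step is routine once nonnegativity is in hand.
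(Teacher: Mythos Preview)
Your overall strategy matches the paper's: start from a destabilizing $\Delta$, replace it by a nonnegative one in the same structure with $\rho(M\tilde\Delta)\geq 1$, scale down to hit $\rho=1$ exactly, and invoke Perron--Frobenius for the nonnegative eigenvector. Two concrete issues remain, one substantive and one cosmetic.

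The substantive gap is in the scalar-block handling. You write that ``the merged block $\delta I_{\bar s}$ is being allowed to be a full nonnegative block'' and propose a ``rank-one nonnegative construction on the stacked scalar part.'' But $\delta I_{\bar s}$ is a scalar multiple of the identity, not a full block, and a rank-one dyad does not produce $\delta I_{\bar s}$ unless $\bar s=1$. (The statement that repeated-scalar blocks may be treated as full blocks is a \emph{consequence} of this theorem, established afterward in Corollary~\ref{cor:delta:scalar}; you cannot invoke it here.) The fix, which is exactly what the paper does, is to take $\delta:=\max_{k}|\delta_k|\leq 1$. On the scalar part this gives $\delta I_{\bar s}\,|v|_{\mathrm s}\geq |q|_{\mathrm s}$ componentwise (equality is not needed), so your Collatz--Wielandt inequality $M\tilde\Delta|v|\geq |v|$ still goes through. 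The paper phrases the same step more compactly at the spectral-radius level: with $\bar\Delta=\diag(|\Delta_1|,\dots,|\Delta_f|,\bar\delta I_{\bar s})$ one has $|\Delta|\leq\bar\Delta$ entrywise and hence $1\leq\rho(M\Delta)\leq\rho(|M\Delta|)\leq\rho(M|\Delta|)\leq\rho(M\bar\Delta)$ by monotonicity of $\rho$ on nonnegative matrices, together with the observation that each full block can be taken rank one so $\||\Delta_k|\|=\|\Delta_k\|$.

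The cosmetic issue is in your last paragraph: once $\rho(M\tilde\Delta)=1$ and $w\geq 0$ is a Perron eigenvector with $M\tilde\Delta w=w$, simply take $q:=w$. Your detour via $q:=\tilde\Delta w$ and the claimed identity $M\tilde\Delta q=\tilde\Delta M q$ is wrong (the two matrices need not commute) and unnecessary.
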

\begin{proof}
By Assumption $\mu(M,\bDelta_{\R\C})\geq1$, therefore there exists $\Delta\in\bDelta_{\R\C}$ such that $\text{det}(I-M\Delta)=0,\,\text{and}\, \norm{\Delta}\leq1$. We conclude that $\rho(M\Delta)\geq1$.
We now consider the following matrix:
$$\bar \Delta:=\diag(|\Delta_1|,\dots,|\Delta_f|, \bar \delta I_{\bar{s}}),$$
Where  $\bar \delta:=\max_j |\delta_j|$.  Notice that $\bar \Delta\in\bDelta_{\R\C}\cap\R_+^{m\times m}$.
From~\cite[Theorem 8.1.18]{horn2012matrix} we know that: 
\be\label{eqn:rho:M:Deltabar}
1\leq \rho(M\Delta)\leq\rho(|M\Delta|)\leq\rho(M|\Delta|)\leq\rho(M\bar \Delta).
\ee
 Now we need to study the norm of $ \bar{\Delta}$.  Since $\Delta\in\bDelta$ we know that:
 $$\Delta=\diag(\Delta_1,\dots,\Delta_f,\delta_{f+1}I_{m_{f+1}},\dots,\delta_{f+s}I_{m_{f+s}}),$$ and $\norm{\Delta}=\max\{\max_k\norm{\Delta_k},\max_j|\delta_j|\}\leq1$. It is well known that every block $\Delta_k$ can be taken as a dyad (matrix of rank 1), see for example~\cite{packard1993complex}. We therefore write $\Delta_k=\xi_k\zeta^\top_k$, clearly $\norm{\Delta_k}=\norm{\xi_k}\norm{\zeta_k}=\norm{|\xi_k|}\norm{|\zeta_k|}=\norm{|\Delta_k|}$.  We conclude that: $\norm{\bar \Delta}=\norm{\Delta}\leq1$. From \eqref{eqn:rho:M:Deltabar} we notice that $\lambda:=\rho(M\bar\Delta)\geq1$. We now define the matrix $\tilde\Delta:=\bar{\Delta}/\lambda$, clearly $\|\tilde{\Delta}\|\leq1$ and $\rho(M\tilde{\Delta})=1$. Since $M\tilde\Delta$ is a nonnegative matrix, we know from Perron--Frobenius Theorem that there exist a nonnegative eigenvector $q$ such that $M\tilde\Delta q=q$, which implies that $(I-M\tilde\Delta)q=0$. Therefore $(I-M\tilde\Delta)$ is singular and  and the proof is complete.
 \end{proof}
 
Theorem~\ref{thm:delta:identity} has two important implications, which we summarize in Corollary~\ref{cor:delta:scalar}.
\begin{itemize}
\item It allows us to replace possibly complex valued scalar blocks with full blocks in the perturbation $\Delta$ (with some blocks of dimension 1).
\item It allows to consider real nonnegative perturbations without loss of generality.
\end{itemize}
\begin{corollary}\label{cor:delta:scalar}
For $M\geq0$ robust stability with respect to the uncertain set: $\mc B_{\bDelta_{\R\C}}$ defined in~\eqref{eqn:ball:RC}, is equivalent to robust stability with respect to the set:
\begin{equation}\label{eqn:corollary}
\begin{split}
\mc B_{\bDelta}=\{ &\diag(\Delta_1,\dots,\Delta_f,\delta_{f+1},\dots,\delta_{f+\bar{s}}), \\
  &\Delta_k\in\R_+^{m_k\times m_k}\,\forall k \in \mathbb Z_{[1,f]},\\
  & \delta_k\in\R_+\,\forall k \in \mathbb Z_{[f+1,f+\bar s]},\,\norm{\Delta}\leq1\}.
\end{split}
\end{equation}
where $\bar s:=\sum_{k=f+1}^{f+s}m_k$, or, in other words, $\mu(M,\bDelta_{\R\C})=\mu(M,\bDelta)$. \qed
\end{corollary}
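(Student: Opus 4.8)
The plan is to derive Corollary~\ref{cor:delta:scalar} directly from Theorem~\ref{thm:delta:identity} together with Proposition~\ref{propo:robust:stability}. First I would observe that the corollary reduces to the claim $\mu(M,\bDelta_{\R\C})=\mu(M,\bDelta)$ for fixed $M\geq0$, because once this equality holds at every frequency (here $M$ is a constant nonnegative matrix, so there is a single test), Proposition~\ref{propo:robust:stability} immediately gives equivalence of the two robust stability notions. So the core task is the matrix-level equality of the two structured singular values.

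For the inequality $\mu(M,\bDelta_{\R\C})\geq\mu(M,\bDelta)$, I would note that $\bDelta$ as defined in~\eqref{eqn:corollary} is essentially a subset of $\bDelta_{\R\C}$ after the cosmetic identification of a repeated scalar block $\delta_i I_{m_i}$ with $m_i$ separate $1\times1$ full blocks: any $\Delta\in\bDelta$ with $\norm{\Delta}\le1$ that makes $\det(I-M\Delta)=0$ corresponds to an admissible perturbation in $\bDelta_{\R\C}$ of the same norm (the real $1\times1$ blocks are allowed, and nonnegativity is no obstruction since $\bDelta_{\R\C}$ imposes no sign constraint). Hence the infimum defining $\mu(M,\bDelta_{\R\C})$ is over a larger set, giving $\mu(M,\bDelta_{\R\C})\ge\mu(M,\bDelta)$; equivalently, $1/\mu(M,\bDelta_{\R\C})\le 1/\mu(M,\bDelta)$. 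Care is needed with the degenerate case where no destabilizing $\Delta$ exists, in which both quantities are $0$ by convention.

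For the reverse inequality $\mu(M,\bDelta_{\R\C})\le\mu(M,\bDelta)$, I would apply Theorem~\ref{thm:delta:identity} after the standard homogeneity normalization of $\mu$: for any $\gamma>0$, $\mu(M,\bDelta_{\R\C})\ge\gamma$ iff $\mu(\gamma M,\bDelta_{\R\C})\ge1$ (since $\bDelta_{\R\C}$ and $\bDelta$ are cones and $\mu$ scales linearly in $M$). Taking $\gamma$ slightly below $\mu(M,\bDelta_{\R\C})$ so that $\mu(\gamma M,\bDelta_{\R\C})\ge 1$, Theorem~\ref{thm:delta:identity} produces a $\tilde\Delta=\diag(\Delta_1,\dots,\Delta_f,\delta I_{\bar s})$ with nonnegative entries, $\norm{\tilde\Delta}\le1$, and $\det(I-\gamma M\tilde\Delta)=0$. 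Viewing the single repeated scalar block $\delta I_{\bar s}$ as $\bar s$ identical nonnegative $1\times1$ blocks exhibits $\tilde\Delta$ as an element of $\bDelta$, so $\mu(\gamma M,\bDelta)\ge1$, i.e.\ $\mu(M,\bDelta)\ge\gamma$. Letting $\gamma\uparrow\mu(M,\bDelta_{\R\C})$ yields the claim.

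The main obstacle — really the only nontrivial point — is already discharged by Theorem~\ref{thm:delta:identity}: the fact that the worst-case perturbation can be taken real, nonnegative, and with all repeated scalar blocks collapsed into a single repeated block. The remaining work is bookkeeping: the homogeneity rescaling, the reinterpretation of repeated scalar blocks as collections of $1\times1$ full blocks (which is why the full-block count in $\bDelta$ jumps from $f$ to $f+\bar s$), and correctly handling the convention $\mu=0$ when the interconnection is nonsingular for all admissible $\Delta$. I would close by invoking Proposition~\ref{propo:robust:stability} to translate the equality $\mu(M,\bDelta_{\R\C})=\mu(M,\bDelta)$ back into the stated equivalence of robust stability with respect to $\mc B_{\bDelta_{\R\C}}$ and $\mc B_{\bDelta}$.
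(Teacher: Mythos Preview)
Your argument for the inequality $\mu(M,\bDelta_{\R\C})\le\mu(M,\bDelta)$ via Theorem~\ref{thm:delta:identity} and homogeneity is fine, and it matches the paper. The gap is in the other direction. You claim that $\bDelta$ is ``essentially a subset of $\bDelta_{\R\C}$'' after a ``cosmetic identification'' of a repeated scalar block $\delta_i I_{m_i}$ with $m_i$ separate $1\times1$ blocks. That identification is not cosmetic: replacing $\delta_i I_{m_i}$ by $m_i$ independent $1\times1$ blocks \emph{enlarges} the structure, and a generic element of $\bDelta$---say $\diag(\Delta_1,\dots,\Delta_f,\delta_{f+1},\dots,\delta_{f+\bar s})$ with the $\delta_k$ all distinct---does \emph{not} lie in $\bDelta_{\R\C}$, whose last $\bar s$ diagonal positions must be constant on each of the $s$ blocks of sizes $m_{f+1},\dots,m_{f+s}$. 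So no inclusion argument is available, and your proof of $\mu(M,\bDelta_{\R\C})\ge\mu(M,\bDelta)$ does not go through.

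The fix, which is exactly what the paper does, is to invoke Theorem~\ref{thm:delta:identity} a second time. View $\bDelta$ itself as a $\bDelta_{\R\C}$-type structure with $f$ full blocks and $\bar s$ repeated-scalar blocks of multiplicity one. If $\mu(M,\bDelta)\ge1$, Theorem~\ref{thm:delta:identity} applied to this structure produces a destabilizing $\tilde\Delta=\diag(\Delta_1,\dots,\Delta_f,\delta I_{\bar s})$ with $\Delta_k\ge0$, $\delta\ge0$, and $\|\tilde\Delta\|\le1$. This $\tilde\Delta$ lies in the intermediate set
\[
\tilde\bDelta:=\{\diag(\Delta_1,\dots,\Delta_f,\delta I_{\bar s}):\Delta_k\in\R_+^{m_k\times m_k},\;\delta\in\R_+\},
\]
and $\tilde\bDelta\subset\bDelta_{\R\C}$ (a single nonnegative repeated scalar is a special case of $s$ repeated scalars, and nonnegative full blocks are admissible in $\bDelta_{\R\C}$). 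Hence $\mu(M,\bDelta_{\R\C})\ge1$, and the homogeneity/scaling step then yields $\mu(M,\bDelta_{\R\C})\ge\mu(M,\bDelta)$. In short: both inequalities require Theorem~\ref{thm:delta:identity}, routed through the common subset $\tilde\bDelta$; neither follows from a set inclusion between $\bDelta$ and $\bDelta_{\R\C}$.
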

\begin{proof} By Theorem~\ref{thm:delta:identity}, robust stability with respect to $\mc B_{\bDelta}$ and robust stability with respect to $\mc B_{\bDelta_{\R\C}}$ are both equivalent to robust stability with respect to 
\begin{equation*}
\begin{split}
\mc B_{\tilde \bDelta}:=\{& \diag(\Delta_1,\dots,\Delta_f,\delta I_{\bar s}), \nonumber\\
 & \Delta_k\in\R_+^{m_k\times m_k}\,\forall k \in \mathbb Z_{[1,f]}, \delta \in\R_+,\,\norm{\Delta}\leq1\}.
\end{split}
\end{equation*}
therefore $\mu(M,\bDelta_{\R\C})<1\iff\mu(M,\bDelta)<1$. Since $\mu(\alpha M,\cdot)=|\alpha|\mu(M,\cdot)$~\cite{packard1993complex}, it is clear that, unless $\mu(M,\bDelta_{\R\C})=\mu(M,\bDelta)$, we could find a value $\alpha>0$ such that $\mu(\alpha M,\bDelta_{\R\C})=1$ and $\mu(\alpha M,\bDelta)<1$ reaching a contradiction.
\end{proof}
\subsection{Block diagonal uncertainty structure}
In view of Corollary~\ref{cor:delta:scalar}, when dealing with nonnegative matrices,  we can restrict ourselves to study robust stability with respect to the block diagonal real-nonnegative uncertainty structure
$$
\bDelta:=\{\diag(\Delta_1,\dots,\Delta_N)\,:\,\Delta_k\in\R_+^{m_k\times m_k}\,\forall k \in \Z_{[1,N]}\},
$$
which contains only full-blocks {(possibly of dimension $m_k$=1 for some $k$)}, as all cases including scalar-times-identity or complex blocks can be re-casted in this form \textit{w.l.o.g}. 

We define the set $\bTheta$ of all positive definite matrices that commute with $\bDelta$ given by
$$
\bTheta:=\{\diag(\theta_1I_{m_1},\dots,\theta_N I_{m_N})\,:\,\,\theta_k\in\R_{++}\forall k \in \Z_{[1,N]}\}.
$$
We now need the following lemma:
\begin{lemma}
\label{lem:delta-norm} Let $p,q\in\R^m_+$. Then there exist a matrix $\Delta\in\R_+^{m\times m}$ with $\|\Delta\|\leq1$, such that $q=\Delta p$ if and only if $\ds\norm{q}\leq\norm{p}$.\qed
\end{lemma}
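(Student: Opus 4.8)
The plan is to prove both directions separately, with the nontrivial content lying in the construction of $\Delta$.

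For the "only if" direction: if $q = \Delta p$ with $\|\Delta\| \leq 1$, then $\|q\| = \|\Delta p\| \leq \|\Delta\|\,\|p\| \leq \|p\|$ by the definition of the operator (spectral) norm. This is immediate and requires no positivity.

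For the "if" direction, assume $\|q\| \leq \|p\|$ with $p, q \geq 0$. The first thing I would handle is the degenerate case $p = 0$: then $\|q\| \leq 0$ forces $q = 0$, and any $\Delta$ (say $\Delta = 0$) works. So assume $p \neq 0$. The natural candidate is the rank-one (dyadic) matrix
\[
\Delta := \frac{1}{\|p\|^2}\, q\, p^\top.
\]
Since $p, q \geq 0$, we have $\Delta \geq 0$ entrywise, as required. One checks directly that $\Delta p = \frac{1}{\|p\|^2} q (p^\top p) = q$. It remains to bound the norm: $\Delta$ has rank one, so $\|\Delta\| = \frac{1}{\|p\|^2}\|q\|\,\|p\| = \frac{\|q\|}{\|p\|} \leq 1$ by hypothesis. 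This completes the construction.

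I do not anticipate a serious obstacle here; the lemma is essentially the observation that a nonnegative target can be hit by a nonnegative dyad, and the dyad's norm is exactly the ratio of lengths. The only points needing a word of care are the $p = 0$ edge case and the fact that the outer product of two nonnegative vectors is a nonnegative matrix (so that $\Delta$ genuinely lies in $\R_+^{m\times m}$, not merely in $\R^{m\times m}$). I expect the author's proof to follow exactly this dyadic construction.
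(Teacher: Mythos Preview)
Your proof is correct and follows exactly the same dyadic construction $\Delta = qp^\top/\|p\|^2$ as the paper; in fact you are slightly more careful than the paper, which omits the $p=0$ edge case and the explicit verification that $\Delta\in\R_+^{m\times m}$.
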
 
\begin{proof}
We assume that  $\norm{q}\leq\norm{p}$, then $$\ds\Delta=\frac{qp^\top}{\norm{p}^2}$$ satisfies the requirements. The other direction comes directly from the definition of matrix norm. 
\end{proof}

We can use Lemma~\ref{lem:delta-norm} to characterize necessary and sufficient conditions for  $\mu(M,\bDelta)<1$ in terms of inequalities in the norms of the components of $q$ and $Mq$ corresponding to the blocks in the structure of $\bDelta$. We can write these components as $E_kq$ and $E_kMq$ respectively, where
$$
{E_k:=[0_{m_1}\,\cdots\,0_{m_{k-1}}\;I_{m_k}\;0_{m_{k+1}}\,\cdots\, 0_{m_{N}}].}
$$
Using this, we can define the quadratic functions
\be\label{eqn:phi}
\phi_k(q):= \norm{(Mq)_k}^2-\norm{q_k}^2=q^\top (M^\top E^\top_kE_kM- E^\top _kE_k)q,\nonumber
\ee
and  the set
$$
\nabla_{\R_+}:=\left\{(\phi_1(q),\dots,\phi_N(q))\,:\,\,q\in\R_+^m,\,\norm{q}=1\right\}.
$$
With the following Proposition we can give a geometrical characterization for the robust stability test.
\begin{proposition}\label{propo:nabla:positive}
For any $M\in\R_+^{m\times m}$,  The following statements are equivalent.
\begin{itemize}
\item[(a)]$\mu(M,\bDelta)<1$,
\item[(b)] The sets $\nabla_{\R_+}$ does not intersect the positive orthant. 
\end{itemize}\qed
\end{proposition}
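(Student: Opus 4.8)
The plan is to prove the equivalence by contraposition in both directions, reading ``the positive orthant'' as the closed nonnegative orthant $\R_+^N$. Two elementary facts will be used throughout: the operator $2$-norm of a block-diagonal matrix equals the largest of the norms of its blocks, and $\det(I-AB)=\det(I-BA)$, so that $\det(I-M\Delta)=0$ iff $\det(I-\Delta M)=0$. Each quadratic form $\phi_k$ is homogeneous of degree two, which lets me normalize freely.

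I would first prove $\neg(a)\Rightarrow\neg(b)$. Assume $\mu(M,\bDelta)\ge 1$. Apply Theorem~\ref{thm:delta:identity} with all blocks taken to be full ($\bar s=0$): since $\bDelta$ sits inside the real full-block instance of $\bDelta_{\R\C}$, $\mu(M,\bDelta)\ge 1$ forces $\mu(M,\bDelta_{\R\C})\ge 1$, and the theorem returns $\tilde\Delta=\diag(\tilde\Delta_1,\dots,\tilde\Delta_N)\in\bDelta_{\R\C}\cap\R_+^{m\times m}=\bDelta$ with $\|\tilde\Delta\|\le 1$ and a vector $q\in\R_+^m\setminus\{0\}$ satisfying $q=M\tilde\Delta q$ (this is where the simplification over the general proof occurs: the absolute-value step is vacuous because $\bDelta$ already consists of nonnegative matrices). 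Set $p:=\tilde\Delta q\ge 0$; then $Mp=q$, and $p\neq 0$ since otherwise $q=Mp=0$. Comparing $k$-th blocks, $(Mp)_k=q_k$ while $p_k=\tilde\Delta_k q_k$, so $\|(Mp)_k\|=\|q_k\|\ge\|\tilde\Delta_k\|\,\|q_k\|\ge\|p_k\|$, using $\|\tilde\Delta_k\|\le\|\tilde\Delta\|\le 1$. Hence $\phi_k(p)\ge 0$ for every $k$, and after normalizing, $\bar p:=p/\|p\|$ gives a point $(\phi_1(\bar p),\dots,\phi_N(\bar p))\in\nabla_{\R_+}\cap\R_+^N$.

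Next I would prove $\neg(b)\Rightarrow\neg(a)$. Suppose $q\in\R_+^m$ with $\|q\|=1$ and $\phi_k(q)\ge 0$ for all $k$, i.e.\ $\|(Mq)_k\|\ge\|q_k\|$. Put $p:=Mq\ge 0$, so $\|p_k\|\ge\|q_k\|$ for every $k$. Applying Lemma~\ref{lem:delta-norm} block by block yields matrices $\Delta_k\in\R_+^{m_k\times m_k}$ with $\|\Delta_k\|\le 1$ and $q_k=\Delta_k p_k$. Then $\Delta:=\diag(\Delta_1,\dots,\Delta_N)\in\bDelta$ with $\|\Delta\|=\max_k\|\Delta_k\|\le 1$, and $q=\Delta p=\Delta M q$, so $(I-\Delta M)q=0$ with $q\neq 0$; hence $\det(I-M\Delta)=0$. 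Consequently the infimum in the definition of $\mu(M,\bDelta)$ is at most $\|\Delta\|\le 1$, i.e.\ $\mu(M,\bDelta)\ge 1$.

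The one place I expect genuine care to be needed is the first implication: turning the purely determinantal hypothesis $\mu(M,\bDelta)\ge 1$ into the existence of an admissible $\tilde\Delta$ with $\|\tilde\Delta\|\le 1$ \emph{together with} a nonnegative Perron eigenvector $q$ of $M\tilde\Delta$ at eigenvalue $1$. This is exactly what Theorem~\ref{thm:delta:identity} supplies (attainment of the infimum by compactness, rescaling by the spectral radius so that $1$ becomes the Perron root, and Perron--Frobenius), so quoting it removes the difficulty. The remaining ingredients --- Lemma~\ref{lem:delta-norm}, the block-diagonal norm identity, homogeneity of the $\phi_k$, and the spectrum-preserving determinant identity --- are routine, so the proof should go through cleanly.
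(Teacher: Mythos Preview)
Your proof is correct and follows essentially the same route as the paper: contraposition in both directions, with Lemma~\ref{lem:delta-norm} applied blockwise to pass between the norm inequalities $\phi_k(q)\ge 0$ and the existence of a contractive $\Delta\in\bDelta$ with $\Delta Mq=q$. The only difference is that for $\neg(a)\Rightarrow\neg(b)$ you explicitly invoke Theorem~\ref{thm:delta:identity} to secure a \emph{nonnegative} fixed vector, whereas the paper presents this step as part of a chain of equivalences (implicitly relying on the same Perron--Frobenius rescaling already packaged in Theorem~\ref{thm:delta:identity}); your version is arguably the more careful one.
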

\begin{proof} We will show that $\neg(a)\iff\neg(b)$. The set $\nabla_{\R_+}$ intersects the positive orthant if and only if there exists $q\in\R_+^m$, with $\norm{q}= 1$ such that: $\phi_k(q)\geq0, \;\forall k\in\Z_{[1,N]}$. By Lemma~\ref{lem:delta-norm}, this happens if and only if, $\forall k\in\Z_{[1,N]}$ there exist $\Delta_k$, with $\|\Delta_k\|\leq1$, such that 
$$
\Delta_kE_kMq=E_kq, \;\;\;\forall\; k\in\Z_{[1,N]}.	
$$
Putting all these blocks together this is equivalent to the existence of q, with $\norm{q}= 1$ and $\Delta\in\bDelta$ with $\norm{\Delta}<1$ such that: $\Delta Mq=q$, which is equivalent to the fact that $(I-\Delta M)$ is singular, which is in turn equivalent to the singularity of $(I-M\Delta )$, which is the negation of (a).
\end{proof}

{Lemma 6 and Proposition~\ref{propo:nabla:positive} are the real nonnegative counterpart of known results that holds for complex valued $M$ and $\Delta$~\cite[Lemma 8.24, Proposition 8.25]{dullerud2000course},~\cite{fan1986characterization,smith1990model}. The fundamental difference is that, if $M\in\R^{m\times m}_+$, we can $w.l.o.g.$ restrict the search for $q$ from $\C^m$ to $\R_+^m$}. This enables us to exploit powerful tools from nonlinear optimization to find tractable conditions for robust stability leading to the main result of this section.
\begin{theorem}\label{thm:nnssv}
Let $M\in\R_+^{m\times m}$. Then the following are equivalent:
\begin{itemize}
\item[a)]$\mu(M,\bDelta)<1$
\item[b)] There exists $\Theta\in\bTheta$ such that $\|\Theta^{\frac{1}{2}}M\Theta^{-\frac{1}{2}}\|<1$.
\end{itemize}\qed
\end{theorem}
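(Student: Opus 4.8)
The implication (b)$\Rightarrow$(a) is the easy one: it is exactly the right-hand inequality in the standard bound~\eqref{eqn:sigmaub}, so nothing new is needed there. The whole content is (a)$\Rightarrow$(b), and the plan is to prove the contrapositive $\neg$(b)$\Rightarrow\neg$(a) by combining the geometric reformulation of Proposition~\ref{propo:nabla:positive} with the exact SDP relaxation of Proposition~\ref{thm:relaxation}. So assume no $\Theta\in\bTheta$ scales $M$ to have norm below $1$; I must produce a nonnegative unit vector $q$ with $\phi_k(q)\ge 0$ for all $k$, i.e.\ show $\nabla_{\R_+}$ meets the positive orthant.

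**Key steps.** First I would reinterpret ``there is no $\Theta$ with $\|\Theta^{1/2}M\Theta^{-1/2}\|<1$'' as the \emph{infeasibility} of a system of strict inequalities. Writing $\Theta=\diag(\theta_1 I_{m_1},\dots,\theta_N I_{m_N})$, the condition $\|\Theta^{1/2}M\Theta^{-1/2}\|<1$ is equivalent to $\Theta - M^\top\Theta M\succ0$, i.e.\ $\sum_k \theta_k\big(E_k^\top E_k - M^\top E_k^\top E_k M\big)\succ0$. Negating over all $\theta\in\R_{++}^N$ and using a separation/duality argument (a theorem-of-the-alternative for the cone of PSD matrices, or equivalently applying Proposition~\ref{thm:relaxation} directly) should yield a nonzero $X\succeq0$ with $\tr\big((E_k^\top E_k - M^\top E_k^\top E_k M)X\big)\le 0$ for every $k$; that is, $\tr(M^\top E_k^\top E_k M X)\ge \tr(E_k^\top E_k X)$ for all $k$. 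The second step is the crucial one: I want a \emph{rank-one} such $X=qq^\top$ with $q\ge0$. Here Proposition~\ref{thm:relaxation} applies, because the matrices $M_k:=M^\top E_k^\top E_k M - E_k^\top E_k$ are entrywise... not quite Metzler in general, so I would instead phrase the primal problem $P_1$ as maximizing $x^\top M_0 x$ over $x\ge0$ subject to $x^\top M_k x\ge 0$ with a suitable choice of $M_0$ (something like $M^\top E_1^\top E_1 M$, or a strictly positive combination) whose off-diagonal structure, together with $M\ge0$, makes every $M_k$ — or at least the relevant ones — Metzler; the nonnegativity of $M$ is exactly what forces $M^\top E_k^\top E_k M\ge 0$ entrywise, hence Metzler. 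Once Proposition~\ref{thm:relaxation}(ii)--(iii) gives a rank-one nonnegative solution $q$, normalize it to $\|q\|=1$; then $\phi_k(q)=q^\top M_k q\ge0$ for all $k$, so $\nabla_{\R_+}$ intersects the positive orthant, and Proposition~\ref{propo:nabla:positive} gives $\mu(M,\bDelta)\ge1$, i.e.\ $\neg$(a).

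**Main obstacle.** The delicate point is getting the problem into the \emph{exact} form required by Proposition~\ref{thm:relaxation}: that proposition needs all the $M_i$ (including the objective $M_0$) to be Metzler, and $M^\top E_k^\top E_k M - E_k^\top E_k$ has a $-E_k^\top E_k$ piece whose diagonal is harmless (Metzler only constrains off-diagonal entries) but one must check that the $-E_k^\top E_k$ contributes only to the diagonal — which it does, since $E_k^\top E_k$ is itself diagonal (a projection onto a coordinate block). So in fact $M_k=M^\top E_k^\top E_k M - E_k^\top E_k$ \emph{is} Metzler whenever $M\ge0$, because $M^\top E_k^\top E_k M\ge0$ entrywise and the subtracted term is diagonal. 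The remaining subtlety is the passage from ``no $\Theta$ works'' to ``the homogeneous quadratic feasibility problem has a solution'': I would set this up as $P_1$ with objective $\max_{q\ge0}\ q^\top(\sum_k M^\top E_k^\top E_k M)\,q$ subject to $\phi_k(q)\ge \epsilon\|q\|^2$ type constraints, and argue that $p_1^\star=p_2^\star$ together with the SDP dual of $P_2$ being exactly the scaling condition in (b) forces, when (b) fails, a feasible nonnegative $q$ realizing $\phi_k(q)\ge0$ for all $k$; handling the strict-versus-nonstrict inequalities and a possible normalization/compactness argument for the supremum is where the care is needed, but it is routine given Propositions~\ref{thm:relaxation} and~\ref{propo:nabla:positive}.
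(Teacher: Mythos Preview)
Your proposal is correct and follows essentially the same route as the paper: the key ingredients---Proposition~\ref{propo:nabla:positive}, the Metzler structure of $M_k=M^\top E_k^\top E_k M-E_k^\top E_k$ (which you eventually identify correctly, since $E_k^\top E_k$ is diagonal and $M\ge0$ makes the first term entrywise nonnegative), the exact relaxation of Proposition~\ref{thm:relaxation}, and an LMI Farkas/alternative step---are exactly those the paper uses, and in the same logical order (you frame it as the contrapositive, the paper as a chain of equivalences, but that is cosmetic).

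Where the paper is cleaner: it does \emph{not} introduce an objective $M_0$ or $\epsilon\|q\|^2$-type perturbations. It simply writes the non-convex feasibility problem $q^\top M_k q\ge0$, $q^\top q=1$, $q\in\R_+^m$ (both $I$ and $-I$ are Metzler, so the equality constraint fits Proposition~\ref{thm:relaxation}), applies Proposition~\ref{thm:relaxation}(iii) to equate its feasibility with that of the SDP $\tr(M_kQ)\ge0$, $\tr(Q)=1$, $Q\succeq0$, and then invokes a separate Farkas lemma for LMIs to equate infeasibility of that SDP with the existence of multipliers $\theta_k\ge0$ satisfying $\sum_k\theta_k M_k\prec0$. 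Your phrase ``or equivalently applying Proposition~\ref{thm:relaxation} directly'' for the alternative step is a conflation: Proposition~\ref{thm:relaxation} handles only the rank-reduction (SDP$\to$QP), while the passage ``no $\Theta$ works $\Rightarrow$ SDP feasible'' is genuine LMI duality, treated as a separate lemma. One small point you skipped, which the paper addresses explicitly: the Farkas multipliers come out only as $\theta_k\ge0$, and one must argue that any solution of $\sum_k\theta_k M_k\prec0$ forces every $\theta_k>0$ (because the $-E_k^\top E_k$ pieces form a disjoint partition of the diagonal, so a vanishing $\theta_k$ would leave a nonnegative diagonal block).
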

\begin{proof}
We define the matrices $M_k:=  M^\top E^\top_kE_kM-E^\top_kE_k$. In Proposition~\ref{propo:nabla:positive} we established that (a) is equivalent to the fact that $\nabla_{\R_+}$ and the positive outhant are disjoint. We can re-write this condition in terms of the infeasibility of the following non-convex quadratic program
\be\label{eqn:quadprog}
\begin{split}
&q^\top M_iq\geq0,\quad \forall\, i\in\Z_{[1,N]}\\
&q^\top q=1\\
&q\in\R_+^m.
\end{split}
\ee
We notice that the since $E^\top_kE_k$ is a partition of the identity and thus diagonal, $\{M_k\}$ are Metzler matrices. By Proposition~\ref{thm:relaxation}, we know that the standard SDP relaxation for non-convex quadratic programs is \textit{exact}. We can therefore reformulate the program in~(\ref{eqn:quadprog}) as 
\be\label{eqn:sdp-relax}
\begin{split}
&\text{tr}(M_iQ)\geq0,\quad  \forall\, i\in\Z_{[1,N]}\\
&\text{tr}(Q)=1\\
&Q\succcurlyeq0,
\end{split}
\ee
and if $Q=[q_{ij}]$ solves~(\ref{eqn:sdp-relax}), then $q=[\sqrt{q_{11}},...,\sqrt{q_{NN}}]$ solves~(\ref{eqn:quadprog}). Infeasibility of~(\ref{eqn:sdp-relax}) is thus equivalent to infeasibility of~(\ref{eqn:quadprog}). {We can now use~\cite[Proposition B.2]{ColSmi:2014:IFA_4769},}  which exploits a version of Farkas Lemma for LMIs, to prove that~(\ref{eqn:sdp-relax}) is infeasible if and only if there exist multipliers $\theta_k\geq0$ such that
\be\label{eqn:farkas-sdp}
\begin{split}
\sum_{i=k}^{N}\theta_kM_k\prec0.\\
\end{split}
\ee
Since the matrices $M_k$ are nonnegative except possibly for a partition of the diagonal corresponding to $-E_k^\top E_k$, the matrices $E_k^\top E_k$ together form a \textit{disjoint} partition of the identity and a necessary condition for $\sum_{i=k}^{N}\theta_kM_k\prec0$ is that the whole diagonal is negative, the multipliers $\{\theta_k\}$ must all be strictly positive for~\eqref{eqn:farkas-sdp} to hold. We then, \textit{w.l.o.g.}, restrict the search to $\theta_k>0$.
The condition in~(\ref{eqn:farkas-sdp}) can thus be rewritten as follows. There exist $\theta_k>0$ such that
\begin{align}\label{eqn:sum:theta:e}
\sum_{i=k}^{N}\theta_kM^\top E^\top_kE_kM-\theta_kE^\top_kE_k\prec0.
\end{align}
Since $$\sum_{i=k}^{N}\theta_kE^\top_kE_k=\diag(\theta_1I,\dots,\theta_NI)\in\bTheta,$$ the condition in~(\ref{eqn:sum:theta:e}) can be equivalently reformulated as $\exists \,\Theta\in\bTheta$ such that
\begin{align}\label{eqn:thetaLMI}
M^\top\Theta M-\Theta\prec0.
\end{align}
From simple algebraic manipulations one can show that the condition in~\eqref{eqn:thetaLMI} is equivalent to the existence $\Theta\in\bTheta$ such that
\begin{align}
\|\Theta^{\frac{1}{2}}M\Theta^{-\frac{1}{2}}\|<1.\nonumber
\end{align}
which concludes the proof. 
\end{proof}
One interpretation of the multipliers $\{\theta_i\}$ is that of the coefficients of a hyperplane that separates the sets $\nabla_{\R_+}$ and the positive orthant. From the proof we can deduce that, if $M\geq0$ despite the fact that $\nabla_{\R_+}$ is not necessarily convex, if $\mu(M,\bDelta)<1$, then there exist an hyperplane that separates $\nabla_{\R_+}$ from $\R^N_+$.

The following corollary is an immediate consequence of Theorem~\ref{thm:nnssv}.
\begin{corollary}\label{cor:mu=ub} For all $M\in\R_+^{m\times m}$:
\be\label{eqn:coro}\mu(M,\bDelta)=\inf_{\Theta\in\bTheta}\|\Theta^{\frac{1}{2}}M\Theta^{-\frac{1}{2}}\|.\ee
\end{corollary}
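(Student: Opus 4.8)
The plan is to deduce Corollary~\ref{cor:mu=ub} from Theorem~\ref{thm:nnssv} together with the standard homogeneity and scaling properties of $\mu$ already cited in the paper. First I would recall two facts. On one hand, the right-hand inequality in \eqref{eqn:sigmaub} gives $\mu(M,\bDelta)\leq\inf_{\Theta\in\bTheta}\bar\sigma(\Theta^{1/2}M\Theta^{-1/2})$ for every $M$, so only the reverse inequality needs work. On the other hand, $\mu(\alpha M,\bDelta)=|\alpha|\mu(M,\bDelta)$ for all $\alpha\in\R$ \cite{packard1993complex}, and for $\Theta\in\bTheta$ and $\alpha\neq0$ one has $\Theta^{1/2}(\alpha M)\Theta^{-1/2}=\alpha\,\Theta^{1/2}M\Theta^{-1/2}$, so the quantity $U(M):=\inf_{\Theta\in\bTheta}\|\Theta^{1/2}M\Theta^{-1/2}\|$ is also positively homogeneous: $U(\alpha M)=|\alpha|\,U(M)$ for $\alpha>0$. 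Note also that $\alpha M\in\R_+^{m\times m}$ whenever $M\in\R_+^{m\times m}$ and $\alpha>0$, so Theorem~\ref{thm:nnssv} applies to every positive scalar multiple of $M$.

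The core step is a scaling argument. Fix $M\in\R_+^{m\times m}$ and suppose first that $\mu(M,\bDelta)>0$; set $\gamma:=\mu(M,\bDelta)$ and consider $M':=M/\gamma$, which is still nonnegative and satisfies $\mu(M',\bDelta)=1$ by homogeneity. Theorem~\ref{thm:nnssv} (in the contrapositive of the implication (b)$\Rightarrow$(a), i.e.\ $\neg$(a)$\Rightarrow\neg$(b)) tells us that $\mu(M',\bDelta)\geq1$ forces $U(M')\geq1$; combined with the general upper bound $U(M')\geq\mu(M',\bDelta)=1$ this is automatic, so the real content is the other direction. More carefully: for any $\varepsilon>0$, $\mu(M'/(1+\varepsilon),\bDelta)=1/(1+\varepsilon)<1$, so by Theorem~\ref{thm:nnssv}(a)$\Rightarrow$(b) there is $\Theta\in\bTheta$ with $\|\Theta^{1/2}(M'/(1+\varepsilon))\Theta^{-1/2}\|<1$, hence $U(M')\leq1+\varepsilon$. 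Letting $\varepsilon\to0$ gives $U(M')\leq1$, so $U(M')=1$, and by homogeneity $U(M)=\gamma\,U(M')=\gamma=\mu(M,\bDelta)$.

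It remains to treat the degenerate case $\mu(M,\bDelta)=0$, i.e.\ $\det(I-M\Delta)\neq0$ for all $\Delta\in\bDelta$. Here I would argue that $U(M)=0$ as well: if $U(M)=c>0$, then $M/(2c)$ is nonnegative with $U(M/(2c))=1/2<1$, which by the upper bound in \eqref{eqn:sigmaub} gives $\mu(M/(2c),\bDelta)\leq1/2$; but then again by homogeneity $\mu(M,\bDelta)=2c\cdot\mu(M/(2c),\bDelta)$ could be nonzero only if $\mu(M/(2c),\bDelta)\neq0$, which is consistent, so this direction alone is not conclusive. A cleaner route for this case: $\mu(M,\bDelta)=0$ means $\rho(M\Delta)<1$ for \emph{all} $\Delta\in\bDelta$ with $\|\Delta\|$ arbitrarily large after rescaling, which in the nonnegative setting (taking $\Delta$ a nonnegative rank-one dyad, cf.\ Lemma~\ref{lem:delta-norm} and Theorem~\ref{thm:delta:identity}) forces the relevant rows/columns of $M$ to vanish, whence $\Theta^{1/2}M\Theta^{-1/2}$ can be made arbitrarily small by an appropriate choice of $\Theta$, giving $U(M)=0=\mu(M,\bDelta)$.

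The main obstacle is purely bookkeeping: making the $\mu=0$ boundary case airtight, and being careful that Theorem~\ref{thm:nnssv} is stated with strict inequalities on both sides, so the identity $\mu=U$ must be extracted by the $(1+\varepsilon)$-perturbation limiting argument rather than by a single direct application. Everything else is the homogeneity of both $\mu(\cdot,\bDelta)$ and $U(\cdot)$ plus the already-available upper bound from \eqref{eqn:sigmaub}.
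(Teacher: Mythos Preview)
Your approach is the same as the paper's: both exploit the homogeneity $\mu(\alpha M,\bDelta)=|\alpha|\,\mu(M,\bDelta)$ together with Theorem~\ref{thm:nnssv} to upgrade the equivalence ``$\mu<1\iff U<1$'' to the identity $\mu=U$, where $U(M):=\inf_{\Theta\in\bTheta}\|\Theta^{1/2}M\Theta^{-1/2}\|$. Your $(1+\varepsilon)$-limiting argument for the case $\mu(M,\bDelta)>0$ is correct and is just a more explicit unpacking of the paper's one-line contradiction argument.

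The only genuine gap is your treatment of the degenerate case $\mu(M,\bDelta)=0$. Your first attempt (scaling by $1/(2c)$ and invoking only the upper bound $\mu\leq U$ from~\eqref{eqn:sigmaub}) is, as you correctly observe, inconclusive; but your fallback ``cleaner route'' via structural vanishing of rows and columns of $M$ is not made rigorous and is unnecessary. The fix is to use Theorem~\ref{thm:nnssv} itself rather than merely the upper bound: if $\mu(M,\bDelta)=0$, then for every $\alpha>0$ one has $\mu(\alpha M,\bDelta)=0<1$, so Theorem~\ref{thm:nnssv} gives $U(\alpha M)=\alpha\,U(M)<1$, and letting $\alpha\to\infty$ forces $U(M)=0$. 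Equivalently, the paper's contradiction argument handles both cases at once: if $\mu(M,\bDelta)<U(M)$, pick $\alpha=1/U(M)$, so that $\mu(\alpha M,\bDelta)<1$ while $U(\alpha M)=1$; applying Theorem~\ref{thm:nnssv} to $\alpha M$ then yields $U(\alpha M)<1$, a contradiction. No separate case analysis or structural argument about $M$ is required.
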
 
Since $\mu(\alpha M,\bDelta)=|\alpha|\mu(M,\bDelta)$~\cite{packard1993complex}, if~(\ref{eqn:coro}) did not hold one could scale $M$ in a way that contradicts Theorem~\ref{thm:nnssv}.

Notice that the inequality~(\ref{eqn:thetaLMI}) is an LMI and therefore it can be solved in polynomial time using an interior point algorithm.
An early result that shows that the convex upper bound to $\mu$ is tight for nonnegative matrices appears in~\cite{safonov1982stability} for the particular case when the matrix $\Delta$ is restricted to be diagonal.


\section{Robust stability of positively\\ dominated systems} \label{sec:rob:stab:pos}

All results so far were dealing with \textit{matrices} only, now we want to obtain conditions for robust stability of positively dominated and positive \textit{systems}. Let $M\in\mathbb H_\infty$, according to Proposition~\ref{propo:robust:stability}, in order to test the robust stability of the interconnection in~\eqref{eqn:interconnection} with respect of all $\Delta\in\mc B_{\bDelta_{\R\C}}$, we need to check that $ \sup_{\omega\in\R} \mu(\hat{M}(j\omega),\bDelta_{\R\C})<1.$  We propose a frequency domain characterization of robust stability for positively dominated systems and we show that we can obtain a convex condition involving only the transfer matrix at zero frequency. First we need to define a new uncertainty set related to $\bDelta_{\R\C}$. Given $\bDelta_{\R\C}$ defined in~\eqref{eqn:deltarc} and $\bar s:=\sum_{k=f+1}^{f+s}m_k$, we consider the sets
\begin{align*}
\bDelta=\{ &\diag(\Delta_1,\dots,\Delta_f,\delta_{f+1},\dots,\delta_{f+\bar{s}})\,:\,\\
  &\Delta_k\in\R_+^{m_k\times m_k}\,\forall k \in \mathbb Z_{[1,f]},\\
  & \delta_k\in\R_+\,\forall k \in \mathbb Z_{[f+1,f+\bar s]}\}.
\end{align*}
and the set of positive definite matrices commuting with $\bDelta$
\begin{align*}
\bTheta=\{ &\diag(\theta_1 I_{m_1},\dots,\theta_f I_{m_f},\theta_{f+1},\dots,\theta_{f+\bar{s}})\,:\, \\
  & \theta_k\in\R_{++}\,\forall k \in \mathbb Z_{[1,f+\bar s]}\}.
\end{align*}
We already showed, with Corollary~\ref{cor:delta:scalar}, that for the case of positive matrices, robust stability with respect of the unit ball of the set $\bDelta_{\R\C}$ is equivalent to robust stability with respect of the unit ball of the set $ \bDelta$. In the following we show that this still holds for the case of positively dominated systems.
\subsection{Frequency domain convex characterization}
The following theorem is the main result of the paper and provides a convex characterization for robust stability of positively dominated systems in the frequency domain.
\begin{theorem}\label{thm:mu_zero} For a positively dominated system $M\in\mathbb H_\infty$ then the following holds:\\

 $$\sup_{\omega\in\R}\mu(\hat{M}(j\omega),\bDelta_{\R\C})= \sup_{\omega\in\R}\mu(\hat{M}(j\omega),\bDelta)=
  \inf_{\Theta\in\bTheta}\|\Theta^{\frac{1}{2}}\hat{M}(0)\Theta^{-\frac{1}{2}}\|. $$
\end{theorem}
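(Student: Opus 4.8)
The plan is to chain together three equalities. The first, $\sup_\omega \mu(\hat M(j\omega),\bDelta_{\R\C}) = \sup_\omega \mu(\hat M(j\omega),\bDelta)$, is not quite an immediate consequence of Corollary~\ref{cor:delta:scalar}, because that corollary applies to a \emph{nonnegative matrix}, whereas $\hat M(j\omega)$ is complex for generic $\omega$. The idea is to reduce to the nonnegative case by a domination argument: for a positively dominated system, $|\hat m_{ik}(j\omega)| \le \hat m_{ik}(0)$ entrywise, so $|\hat M(j\omega)| \le \hat M(0)$ with $\hat M(0) \ge 0$. The key sub-step is to show $\mu(\hat M(j\omega),\bDelta_{\R\C}) \le \mu(\hat M(0),\bDelta_{\R\C})$ for every $\omega$. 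This follows from the standard fact that $\mu$ is monotone under entrywise domination of the modulus: if $|A| \le B$ then $\mu(A,\bDelta) \le \mu(B,\bDelta)$ for the structures considered, which itself rests on the spectral-radius monotonicity $\rho(|A\Delta|) \le \rho(B\bar\Delta)$ used already in the proof of Theorem~\ref{thm:delta:identity} (via \cite[Theorem 8.1.18]{horn2012matrix}), combined with the dyad representation of the worst-case $\Delta$ as in that proof. Hence $\sup_\omega \mu(\hat M(j\omega),\bDelta_{\R\C}) = \mu(\hat M(0),\bDelta_{\R\C})$, and likewise $\sup_\omega \mu(\hat M(j\omega),\bDelta) = \mu(\hat M(0),\bDelta)$.

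Once everything is evaluated at $\omega=0$ with $\hat M(0) \ge 0$, the remaining two equalities are exactly Corollary~\ref{cor:delta:scalar} and Corollary~\ref{cor:mu=ub}: $\mu(\hat M(0),\bDelta_{\R\C}) = \mu(\hat M(0),\bDelta)$ and $\mu(\hat M(0),\bDelta) = \inf_{\Theta\in\bTheta}\|\Theta^{1/2}\hat M(0)\Theta^{-1/2}\|$. So the proof assembles as: (i) invoke positive domination to get $|\hat M(j\omega)| \le \hat M(0)$; (ii) prove the $\mu$-domination inequality $\mu(\hat M(j\omega),\cdot) \le \mu(\hat M(0),\cdot)$ for both $\bDelta_{\R\C}$ and $\bDelta$, reducing the supremum over $\omega$ to its value at $0$; (iii) apply Corollary~\ref{cor:delta:scalar} to equate the two $\bDelta_{\R\C}$ and $\bDelta$ values at $0$; (iv) apply Corollary~\ref{cor:mu=ub} to rewrite the $\bDelta$ value as the LMI bound.

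The main obstacle I anticipate is step (ii): carefully justifying $\mu(A,\bDelta) \le \mu(B,\bDelta)$ when $|A| \le B$, for the \emph{mixed} real/complex structure $\bDelta_{\R\C}$. The definition of $\mu$ via $\inf\{\bar\sigma(\Delta) : \det(I - A\Delta)=0\}$ does not obviously respect modulus domination, so one must argue through the lower bound: given $\Delta \in \bDelta_{\R\C}$ with $\bar\sigma(\Delta) \le 1/\mu(A,\bDelta)$ and $\det(I-A\Delta)=0$, one has $\rho(A\Delta) \ge 1$ after scaling, then passes to moduli exactly as in Theorem~\ref{thm:delta:identity} to build a nonnegative $\bar\Delta \in \bDelta_{\R\C}$ of the same norm with $\rho(B\bar\Delta) \ge \rho(A\Delta) \ge 1$, whence $\mu(B,\bDelta_{\R\C}) \ge \mu(A,\bDelta_{\R\C})$. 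One should also note the reverse inequality is trivial only after restricting to $\omega=0$, since $\hat M(0) = |\hat M(0)|$; this is what pins the supremum exactly at $0$ rather than merely bounding it. A minor additional point is handling the case $\mu = 0$ (no destabilizing $\Delta$), which is covered by the convention and does not affect the chain.
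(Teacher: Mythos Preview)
Your proposal is correct and reaches the conclusion, but by a genuinely different mechanism than the paper's proof. The paper does not establish the pointwise inequality $\mu(\hat M(j\omega),\cdot)\le\mu(\hat M(0),\cdot)$ directly from entrywise domination of moduli. Instead it works through the convex upper bound: it fixes a near-optimal scaling $\Theta_k\in\bTheta$ at $\omega=0$, observes that $\Theta_k^{1/2}\hat M(j\omega)\Theta_k^{-1/2}$ is itself the transfer function of a positively dominated system (diagonal positive scaling preserves the definition, using the cone property), and then invokes Proposition~\ref{propo:positive:norm} to conclude that $\sup_\omega\|\Theta_k^{1/2}\hat M(j\omega)\Theta_k^{-1/2}\|=\|\Theta_k^{1/2}\hat M(0)\Theta_k^{-1/2}\|$. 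The standard $\mu$ upper bound at each frequency then collapses the supremum to its value at zero, and Corollaries~\ref{cor:delta:scalar} and~\ref{cor:mu=ub} close the chain of equalities. Your route instead proves a $\mu$-monotonicity lemma---if $|A|\le B$ with $B\ge 0$ then $\mu(A,\bDelta_{\R\C})\le\mu(B,\bDelta_{\R\C})$---by essentially rerunning the argument of Theorem~\ref{thm:delta:identity} with $B$ playing the role of the dominating matrix (dyad representation of the full blocks, pass to moduli, spectral-radius monotonicity for nonnegative matrices, renormalize). The paper's route is slightly more economical in that it recycles only already-proved statements and avoids reopening the Perron--Frobenius machinery; yours has the merit of isolating an independently useful monotonicity fact for $\mu$ under entrywise modulus domination, at the cost of reworking that argument. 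Both are sound.
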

\begin{proof} 
We start by considering $ \mu(\hat{M}(0),\bDelta_{\R\C})$. Since $M$ is positively dominated, we have that $\hat{M}(0)\in \R_+^{m\times m}$. Therefore, we can apply Corollary~\ref{cor:delta:scalar} to consider full block real nonnegative uncertainties only and Corollary~\ref{cor:mu=ub} to show equivalence with the upper bound and we get 
\begin{multline}\label{eqn:mu_zero}
\mu(\hat{M}(0),\bDelta_{\R\C}) = \mu(\hat{M}(0),\bDelta)=\inf_{\Theta\in\bTheta} \|\Theta^{\frac{1}{2}}\hat{M}(0)\Theta^{-\frac{1}{2}}\|.
\end{multline}
This implies that there exist a sequence $\{\Theta_k\in\bTheta\}_{k\in\N}$ such that 
\be\label{eqn:lim:theta:k}
\lim_{k\to\infty} \|\Theta_k^{\frac{1}{2}}\hat{M}(0)\Theta_k^{-\frac{1}{2}}\|=\mu(\hat{M}(0),\bDelta_{\R\C}) =\mu(\hat{M}(0),\bDelta) .
\ee
Using the upper bound given by~\eqref{eqn:sigmaub} at each frequency, we have the following inequalitieas for all $\Theta_k$ in the sequence
\be\label{eqn:inequalities:RC}
\begin{split}
&\sup_{\omega\in\R}\mu(\hat{M}(j\omega), \bDelta_{\R\C})\leq\sup_{\omega\in\R}\|\Theta_k^{\frac{1}{2}}\hat{M} (j\omega)\Theta_k^{-\frac{1}{2}}\|,\, \text{and} \\
&\sup_{\omega\in\R}\mu(\hat{M}(j\omega),\bDelta)\leq\sup_{\omega\in\R}\|\Theta_k^{\frac{1}{2}}\hat{M}(j\omega)\Theta_k^{-\frac{1}{2}}\|.
\end{split}
\ee
Since $\Theta_k^{\frac{1}{2}}\in\bTheta\subset\D_{++}$ and set of positively dominated systems is a cone, $\Theta_k^{\frac{1}{2}}\hat{M}(j\omega)\Theta_k^{-\frac{1}{2}}$ is itself the transfer function of a positively dominated system. From Proposition~\ref{propo:positive:norm} we have
\begin{align}\label{eqn:theta:sup}
\sup_{\omega\in\R}\|\Theta_k^{\frac{1}{2}}\hat{M}(j\omega)\Theta_k^{-\frac{1}{2}}\|=\|\Theta_k^{\frac{1}{2}}\hat{M}(0)\Theta_k^{-\frac{1}{2}}\|.
\end{align}
We now let $k\to\infty$ and, in view of~(\ref{eqn:lim:theta:k}),~\eqref{eqn:inequalities:RC} and~\eqref{eqn:theta:sup} we have
\be\label{eqn:mu:pos:dc}
\begin{split}
&\sup_{\omega\in\R}\mu(\hat{M}( j\omega),\bDelta_{\R\C}) \leq\mu(\hat{M}(0),\bDelta)=\mu(\hat{M}(0),\bDelta_{\R\C}),\,\text{and} \\
&\sup_{\omega\in\R}\mu(\hat{M}(j\omega),\bDelta)\leq\mu(\hat{M}(0),\bDelta)=\mu(\hat{M}(0),\bDelta_{\R\C}).
\end{split}
\ee
The inequalities in~\eqref{eqn:mu:pos:dc} always hold with equality as $\mu(\hat{M}(0),\bDelta_{\R\C})\leq\sup_{\omega\in\R}\mu(\hat{M}(j\omega),\bDelta_{\R\C})$ and  $\mu(\hat{M}(0),\bDelta)\leq\sup_{\omega\in\R}\mu(\hat{M}(j\omega),\bDelta)$ respectively. In view of~(\ref{eqn:mu_zero}) the proof is complete.
\end{proof}
In view of Proposition~\ref{propo:robust:stability}, Theorem~\ref{thm:mu_zero} gives us a convex characterization of robust stability for uncertain positively dominated linear systems with respect to all perturbations in $\mc B_{\bDelta_{\R\C}}$. Note that the positive-domination assumption is required only for $M$ and not for the whole interconnection in~(\ref{eqn:interconnection}).


\section{Application to the robust stability of the Foschini--Miljanic algorithm}\label{sec:foschini}

We apply our result to characterize the robust stability of the widely studied distributed power control algorithm for wireless communication presented in~\cite{foschini1993simple} by Foschini and Miljanic.
\subsection{Review of the Foschini--Miljanic algorithm}
We consider $N$ communication channels, each corresponding to a reciever-transmitter pair. We denote by $\{h_{i}\}\in(0,1]$ the transmission gain for channel $i$ and by $\{g_{ji}\}_{i\neq j}\in[0,1]$ the interference gain from channel $j$ to channel $i$. We denote by $\nu_i$ the variance of the thermal noise at receiver $i$ and by $p_i$ the power level chosen by transmitter $i$. The quality of the transmission at channel $i$ is measured by the Signal-to-Noise-and-Interference-Ratio (SINR), denoted as $\Gamma_i$ and defined as
$$
\Gamma_i:=\frac{h_{i} p_i}{\sum_{j\neq i}g_{ji}p_j+\nu_i}.
$$
Assuming that each receiver can measure its SINR $\Gamma_i$ and communicate it to the transmitter, the aim of the Foschini--Miljanic algorithm is to chose the power levels $\{p_i\}_{i=1}^N$ such that $\Gamma_i\geq\gamma_i$ for all $i\in\Z_{[1,N]}$, where $\gamma_i\in\R_+$ are pre-defined constants that guarantee a certain quality-of-service for each communication link. 

The algorithm reads as follows:
\be\label{eqn:foschini:miljanich:algorithm}
\dot p_i=k_i\left(-p_i+\frac{\gamma_i}{h_{i}}\left(  \sum_{j\neq i} {g_{ji}}p_j +{\nu_i}\right)\right),
\ee
where $k_i\in\R_{++}$. The algorithm is decentralized in the sense that each transmitter has access only to the interference measurement of the $i^{th}$ receiver: $I_i:= \sum_{j\neq i} {g_{ji}}p_j +{v_i}$.
If we define the vectors $p=(p_1,...,p_N)^\top$, $\nu=(\nu_1,\dots,\nu_N)^\top$, and the matrices $K=\diag(k_1,\dots,k_N)$, $\Psi=\diag(\frac{\gamma_1}{h_{1}},\dots,\frac{\gamma_N}{h_{N}})$ and $G$ such that
$$
G_{ij}=\left\{
\begin{array}{cc}
0  & \text{if } i=j   \\
g_{ji}  & \text{otherwise} 
\end{array}
\right.,
$$
the algorithm in~\eqref{eqn:foschini:miljanich:algorithm} can be rewritten in matrix form as
\be\label{eqn:foschini:miljanich:algorithm:matrix}
\dot p=K(-I+\Psi G)p+K\Psi\nu,
\ee
Since $K(-I+\Psi G)\in\M$ and $K\Psi\geq 0$,~\eqref{eqn:foschini:miljanich:algorithm:matrix} describes a positive system. In~\cite{foschini1993simple} it was proven that, if there exists a vector $\bar p=(\bar p_1,...,\bar p_N)^\top$ such that
\be\label{eqn:fosc:sol}
\frac{h_{i} \bar p_i}{\sum_{j\neq i}g_{ji}\bar p_j+\nu_i}\geq\gamma_i\quad\forall i\in \Z_{[1,N]},
\ee
Then the algorithm in~\eqref{eqn:foschini:miljanich:algorithm} will converge to such a solution. If a solution $\bar p$ satisfying~\eqref{eqn:fosc:sol} does not exist, then~\eqref{eqn:foschini:miljanich:algorithm} will be unstable.

\subsection{Robust stability of the Foschini--Miljanic algorithm}
We now show that the analysis tools developed in Section \ref{sec:rob:stab:pos} can be used to establish whether the Foschini--Miljanic algorithm (FM algorithm) is robustly stable when the interference matrix $G$ is not known exactly but can be described in a parametric form
\be \label{eqn:uncertain:G}
G=G_0+E\Delta F,
\ee
where $\Delta$ has an arbitrary block diagonal structure and $\bar\sigma(\Delta)<1$. We can consider the system
\be\label{eqn:system:M}
M:\left\{
\begin{array}{l}
 \dot p=K(-I+\Psi G_0)p+K\Psi E q \\
 z=Fp
\end{array}
\right.,
\ee
and the interconnection
\begin{equation}\label{eqn:interconnection:foschini}
\begin{split}
p=Mq\\
q=\Delta p.
\end{split}
\end{equation}
Since $M$ is positive and thus positively dominated, if we consider $\hat M(0)=F(-I+\Psi G_0)^{-1}\Psi E$ to be its transfer function transform evaluated at zero frequency, we know from Theorem~\ref{thm:mu_zero} that we have robust stability with respect to all uncertainties satisfying~\eqref{eqn:uncertain:G} if and only if $\exists \Theta\in\bTheta$ such that
\begin{equation}\label{eqn:LMI:FM}
\hat M(0)^\top \Theta\hat M(0)- \Theta\prec 0.
\end{equation} 
{Note that, with standard $\mu$ analysis, the LMI test in~\eqref{eqn:LMI:FM} would be neither necessary nor sufficient to establish robust stability of the FM algorithm with respect to the uncertainty in the matrix $G$. Not necessary because the $\mu$ upper bound is in general not tight and not sufficient because the test is performed only at zero frequency. In contrast with standard $\mu$ analysis, by exploiting the positivity of $M$, and applying Theorem~\ref{thm:mu_zero}, we know that the LMI test in~\eqref{eqn:LMI:FM} is both necessary and sufficient to test robust stability of the FM algorithm with respect to the uncertain interference matrix $G$.\\
\indent In the literature~\cite{zappavigna2012unconditional} it was shown that the delayed FM algorithm  is stable for any bounded delay if and only if the delay-free system is stable. A similar result can be easily established for robust stability with respect to the interference matrix G using the analysis tools developed in this paper. Adding delay elements of the form $e^{j\omega \tau}$ to the transfer matrix preserves the positive domination of the system and, at the same time, does not change the zero frequency matrix $\hat M(0)$. The stability test remains therefore unchanged in the presence of delays implying that the delayed FM algorithm is {robuslty} stable with respect to G if and only if the corresponding non-delayed algorithm is robustly stable.}


\section{Conclusion and outlook}
In this paper we provide tractable necessary and sufficient conditions for robust stability of uncertain LTI systems modeled as the feedback interconnection of a positively dominated system and a block structured, norm bounded stable LTI system. We give LMI conditions that only depend on the static gain matrix. Our results falls in line with several recent results on positive systems showing that problems that are hard for general systems become easy when restricting to this simpler class of systems. The results we propose are expressed in terms of LMIs; this is in contrast with most of the recent literature on positive systems, where results are obtained prevalently in the form of Liner Programs (LP), see for example~\cite{rantzer2012distributed,briat2013robust,ebihara20111}. LPs allow greater scalability as they can be solved for larger systems  and to higher accuracy. An LP reformulation of our robust stability test is an interesting problem to investigate as it would allow scalable robust stability verification for very large scale systems and it is left as future research.

\bibliography{/Users/Marcello/Dropbox/Bibliography/bib_file}
\bibliographystyle{ieeetr}

\end{document}